\newdimen\paperwidth
\newdimen\paperheight
\def\papersize#1#2{\let\p@persize\relax\paperwidth#1\paperheight#2}
\def\Afour{\papersize{210truemm}{297truemm}}
\let\p@persize\Afour
\let\onesidestyle\@twosidefalse
\let\twosidestyle\@twosidetrue
\def\margins{\@ifnextchar[{\@margins}{\@margins[\z@]}}
\def\@margins[#1]#2#3{
  \p@persize\dimen0 #3\dimen0 .5\dimen0\normalsize%
  \oddsidemargin-1truein\advance\oddsidemargin#2%
  \evensidemargin-1truein\advance\evensidemargin#2%
  \topmargin-1truein\advance\topmargin\dimen0\headsep\dimen0\footskip\dimen0%
  \textwidth\paperwidth\advance\textwidth-#2\advance\textwidth-#2%
  \textheight\paperheight\advance\textheight-#3\advance\textheight-#3%
  \headheight\baselineskip\advance\topmargin-.5\baselineskip%
  \advance\headsep-.5\baselineskip%
  \footheight\baselineskip
  \advance\textwidth-#1\advance\oddsidemargin#1
  \if@twoside\def\@themargin%
    {\ifodd\count\z@\oddsidemargin\else\evensidemargin\fi}\fi}
\def\headlinesep#1{\advance\topmargin\headsep\advance\topmargin -#1
  \advance\topmargin.5\baselineskip\headsep #1\advance\headsep-.5\baselineskip}
\def\headline{\if@twoside\let\n@xt\h@dlin@\else\let\n@xt\h@@dlin@\fi\n@xt}
\def\h@dlin@#1#2{%
  \def\@oddhead{%
    {{\leftskip\z@\rightskip\z@\noindent\normalsize#1}}}
  \def\@evenhead{%
    {{\leftskip\z@\rightskip\z@\noindent\normalsize#2}}}}
\def\h@@dlin@#1{%
  \def\@oddhead{{{\leftskip\z@\rightskip\z@\noindent\normalsize#1}}}}
\def\footline{\if@twoside\let\n@xt\f@tlin@\else\let\n@xt\f@@tlin@\fi\n@xt}
\def\f@tlin@#1#2{%
  \def\@oddfoot{%
    {{\leftskip\z@\rightskip\z@\noindent\normalsize#1}}}
  \def\@evenfoot{%
    {{\leftskip\z@\rightskip\z@\noindent\normalsize#2}}}}
\def\f@@tlin@#1{%
  \def\@oddfoot{{{\leftskip\z@\rightskip\z@\noindent\normalsize#1}}}}
\def\normalpage{\global\@specialpagefalse}
\def\ft{\@ifnextchar[{\ft@s}{\ft@}}
\def\ft@{\ft@@@s[\f@size]}
\def\ft@s[{\@ifnextchar{a}{\ft@sz[}{\ft@@s[}}
\def\ft@@s[{\@ifnextchar{s}{\ft@sz[}{\ft@@@s[}}
\def\ft@@@s[#1]{\ft@sz[at #1pt]}
\def\ft@sz[#1]#2{\font\fonttemp=#2 #1\fonttemp\ignorespaces}
\def\@@bold{bold}
\def\widebar{\ifx\math@version\@@bold
  \let\@widebar\@@@widebar\else\let\@widebar\@@widebar\fi\@widebar}
\def\@@widebar#1{\text{\setbox15\hbox{$#1$}%
  \dimen15 0.45\wd15\advance\dimen15 0.15\ht15%
  \dimen16\ht15\advance\dimen16 0.00em\advance\dimen16 0.3ex%
  \dimen17 0.65\wd15\advance\dimen17 0.05\ht15\advance\dimen17 0.1ex%
  \dimen18 0.035em\advance\dimen18 0.00ex
  \put[\dimen15,\dimen16][c]{\vrule depth 0pt height \dimen18 width \dimen17}}#1}
\def\@@@widebar#1{\text{\setbox15\hbox{$#1$}%
  \dimen15 0.45\wd15\advance\dimen15 0.15\ht15%
  \dimen16\ht15\advance\dimen16 0.00em\advance\dimen16 0.26ex%
  \dimen17 0.65\wd15\advance\dimen17 0.05\ht15\advance\dimen17 0.1ex%
  \dimen18 0.05em\advance\dimen18 0.00ex
  \put[\dimen15,\dimen16][c]{\vrule depth 0pt height \dimen18 width \dimen17}}#1}
\def\smallsquare{\raise-.065em\hbox{$\Box$}}
\def\smallblacksquare{%
  \kern.3ex\vrule depth-.03ex height1.27ex width1.15ex \kern-1.45ex \smallsquare}
\def\smallcircc{\mathop{\mkern3.5mu\text{\raise.58ex\hbox{\ft{lcircle10}a}}}}
\def\varemptyset{{\text{\raise.21ex\hbox{$\not$}}\mkern.15mu\mathrm{O}\mkern.15mu}}
  \let\epsilon\varepsilon
\let\textheta\theta      \let\theta\vartheta
          \let\phi\varphi
\let\Larg@\Large
\let\hug@\huge
\def\usepackage#1{\input{#1.sty}}
\let\input\@input
\def\r@adlabel#1#2{\global\@namedef{#1@\the\@key}{#2}}
\let\Large\Larg@
\let\huge\hug@
\def\smallskip{\vskip\smallskipamount}
\def\medskip{\vskip\medskipamount}
\def\bigskip{\vskip\bigskipamount}
\def\mytrivlist{\parsep\parskip\@nmbrlistfalse
  \my@trivlist \labelwidth\z@ \leftmargin\z@
  \itemindent\z@ \def\makelabel##1{##1}}
\def\my@trivlist{\global\@newlisttrue \@outerparskip\parskip}
\def\end#1{\csname end#1\endcsname\@checkend{#1}%
  \expandafter\endgroup\if@endpe\@doendpe\fi
  \if@ignore \global\@ignorefalse \ignorespaces\fi}
\def\put{\@ifnextchar[{\@put}{\@@rput[\z@,\z@][r]}}
\def\@put[#1]{\@ifnextchar[{\@@put[#1]}{\@@@@@put[#1]}}
\def\@@put[#1][{\@ifnextchar{l}{\@@lput[#1][}{\@@@put[#1][}}
\def\@@@put[#1][{\@ifnextchar{c}{\@@cput[#1][}{\@@@@put[#1][}}
\def\@@@@put[#1][{\@ifnextchar{r}{\@@rput[#1][}{\relax}}
\def\@@@@@put[{\@ifnextchar{l}{\@@lput[\z@,\z@][}{\@@@@@@put[}}
\def\@@@@@@put[{\@ifnextchar{c}{\@@cput[\z@,\z@][}{\@@@@@@@put[}}
\def\@@@@@@@put[{\@ifnextchar{r}{\@@rput[\z@,\z@][}{\@@@@@@@@put[}}
\def\@@@@@@@@put[#1]{\@@rput[#1][r]}
\let\hm@d@\leavevmode
\long\def\@@lput[#1,#2][l]#3{\setbox0\hbox{#3}\hm@d@\raise#2\hbox to\z@{\dimen0 #1%
  \advance\dimen0-\wd0\kern\dimen0\dp0\z@\ht0\z@\wd0\z@\box0\hss}\ignorespaces}
\long\def\@@cput[#1,#2][c]#3{\setbox0\hbox{#3}\hm@d@\raise#2\hbox to\z@{\dimen0 #1%
  \advance\dimen0-.5\wd0\kern\dimen0\dp0\z@\ht0\z@\wd0\z@\box0\hss}\ignorespaces}
\long\def\@@rput[#1,#2][r]#3{\setbox0\hbox{\kern#1\raise#2\hbox{#3}}%
  \dp0\z@\ht0\z@\wd0\z@\hm@d@\box0\ignorespaces}
\def\flbox{\@ifnextchar[{\@flbox}{\@@rflbox[\z@,\z@][r]}}
\def\@flbox[#1]{\@ifnextchar[{\@@flbox[#1]}{\@@@@@flbox[#1]}}
\def\@@flbox[#1][{\@ifnextchar{l}{\@@lflbox[#1][}{\@@@flbox[#1][}}
\def\@@@flbox[#1][{\@ifnextchar{c}{\@@cflbox[#1][}{\@@@@flbox[#1][}}
\def\@@@@flbox[#1][{\@ifnextchar{r}{\@@rflbox[#1][}{\relax}}
\def\@@@@@flbox[{\@ifnextchar{l}{\@@lflbox[\z@,\z@][}{\@@@@@@flbox[}}
\def\@@@@@@flbox[{\@ifnextchar{c}{\@@cflbox[\z@,\z@][}{\@@@@@@@flbox[}}
\def\@@@@@@@flbox[{\@ifnextchar{r}{\@@rflbox[\z@,\z@][}{\@@@@@@@@flbox[}}
\def\@@@@@@@@flbox[#1]{\@@rflbox[#1][r]}
\long\def\@@lflbox[#1,#2][l]#3{\@@lput[#1,#2][l]{%
  \vtop{\leftskip\z@\parindent\z@\raggedleft\hm@d@#3}}}
\long\def\@@cflbox[#1,#2][c]#3{\@@cput[#1,#2][c]{%
  \vtop{\leftskip\z@\parindent\z@\raggedcenter\hm@d@#3}}}
\long\def\@@rflbox[#1,#2][r]#3{\@@rput[#1,#2][r]{%
  \vtop{\leftskip\z@\parindent\z@\raggedright\hm@d@#3}}}
\def\maketitle{\par
 \begingroup
 \def\thefootnote{\fnsymbol{footnote}}
 \def\@makefnmark{\hbox 
 to 0pt{$^{\@thefnmark}$\hss}} 
 \if@twocolumn 
 \twocolumn[\@maketitle] 
 \else 
 \global\@topnum\z@ \@maketitle \fi\thispagestyle{plain}\@thanks
 \endgroup
 \setcounter{footnote}{0}
 \let\maketitle\relax
 \let\@maketitle\relax
 \gdef\@thanks{}\gdef\@author{}\gdef\@title{}\let\thanks\relax}
\def\@maketitle{ 
 \null
 \vskip 2em \begin{center}
 {\LARGE \@title \par} \vskip 1.5em {\large \lineskip .5em
\begin{tabular}[t]{c}\@author 
 \end{tabular}\par} 
 \vskip 1em {\large \@date} \end{center}
 \par
 \vskip 1.5em}
\def\partbeforeskip#1{\def\p@rtbeforeskip{#1}}
\def\partstyle#1{\def\p@rtstyl@{#1}}
\def\partdot#1{\def\partd@t{#1}}
\def\partafterskip#1{\def\p@rtafterskip{#1}}
\def\partintrostyle#1{\def\partintr@styl@{#1}}
\def\partintrodot#1{\def\partintr@dot{#1}}
\long\def\partintrosep#1{\long\def\partintr@sep{#1}}
\def\partnewpagetrue{\def\p@rtnewp@ge{\newpage}}
\def\partnewpagefalse{\long\def\p@rtnewp@ge{\par}}
\def\partname{Part}
\def\part{\p@rtnewp@ge\addvspace\p@rtbeforeskip\@afterindentfalse\secdef\@part\@spart}
\def\@part[#1]#2{\ifnum \c@secnumdepth >-1\relax  
        \refstepcounter{part}                     
        \def\@tempa{\addcontentsline{toc}{part}}  %
        \expandafter\@tempa\expandafter{\thepart  
          \hspace{1em}#1}\else                    
        \addcontentsline{toc}{part}{#1}\fi        
   {\p@rtstyl@                       
    \ifnum \c@secnumdepth >-1\relax        
      {\partintr@styl@\partname\ \thepart  
       \partintr@dot}\partintr@sep\nobreak 
    \fi                                    
    #2\partd@t\markboth{}{}\par}
    \nobreak                       
    \vskip\p@rtafterskip           
   \@afterheading                  
    }                              
\def\@spart#1{{\p@rtcentering\p@rtstyl@                      
    #1\partd@t\par}                 
    \nobreak                        
    \vskip\p@rtafterskip            
    \@afterheading                  
  }                                 
\newif\ifsection@ftind
\newif\ifsection@ftpar
\def\sectionbeforeskip#1{\def\s@ctbeforeskip{#1}}
\def\sectionstyle#1{\def\s@ctstyl@{#1}}
\def\sectiondot#1{\def\sectiond@t{#1}}
\def\sectionafterskip#1{\def\s@ctafterskip{#1}}
\def\sectionintrostyle#1{\def\sectionintr@styl@{#1}}
\def\sectionintro#1{\def\sectionintr@{#1}}
\def\sectionintrodot#1{\def\sectionintr@dot{#1}}
\def\sectionintrosep#1{\def\sectionintr@sep{#1}}
\def\sectionindenttrue{\def\s@ctind{\parindent}}
\def\sectionindentfalse{\def\s@ctind{\z@}}
\def\sectionafterindenttrue{\section@ftindtrue}
\def\sectionafterindentfalse{\section@ftindfalse}
\def\sectionafternewlinetrue{\section@ftpartrue}
\def\sectionafternewlinefalse{\section@ftparfalse}
\newif\ifsubsection@ftind
\newif\ifsubsection@ftpar
\def\subsectionbeforeskip#1{\def\ss@ctbeforeskip{#1}}
\def\subsectionstyle#1{\def\ss@ctstyl@{#1}}
\def\subsectiondot#1{\def\subsectiond@t{#1}}
\def\subsectionafterskip#1{\def\ss@ctafterskip{#1}}
\def\subsectionintrostyle#1{\def\subsectionintr@styl@{#1}}
\def\subsectionintro#1{\def\subsectionintr@{#1}}
\def\subsectionintrodot#1{\def\subsectionintr@dot{#1}}
\def\subsectionintrosep#1{\def\subsectionintr@sep{#1}}
\def\subsectionindenttrue{\def\ss@ctind{\parindent}}
\def\subsectionindentfalse{\def\ss@ctind{\z@}}
\def\subsectionafterindenttrue{\subsection@ftindtrue}
\def\subsectionafterindentfalse{\subsection@ftindfalse}
\def\subsectionafternewlinetrue{\subsection@ftpartrue}
\def\subsectionafternewlinefalse{\subsection@ftparfalse}
\newif\ifsubsubsection@ftind
\newif\ifsubsubsection@ftpar
\def\subsubsectionbeforeskip#1{\def\sss@ctbeforeskip{#1}}
\def\subsubsectionstyle#1{\def\sss@ctstyl@{#1}}
\def\subsubsectiondot#1{\def\subsubsectiond@t{#1}}
\def\subsubsectionafterskip#1{\def\sss@ctafterskip{#1}}
\def\subsubsectionintrostyle#1{\def\subsubsectionintr@styl@{#1}}
\def\subsubsectionintro#1{\def\subsubsectionintr@{#1}}
\def\subsubsectionintrodot#1{\def\subsubsectionintr@dot{#1}}
\def\subsubsectionintrosep#1{\def\subsubsectionintr@sep{#1}}
\def\subsubsectionindenttrue{\def\sss@ctind{\parindent}}
\def\subsubsectionindentfalse{\def\sss@ctind{\z@}}
\def\subsubsectionafterindenttrue{\subsubsection@ftindtrue}
\def\subsubsectionafterindentfalse{\subsubsection@ftindfalse}
\def\subsubsectionafternewlinetrue{\subsubsection@ftpartrue}
\def\subsubsectionafternewlinefalse{\subsubsection@ftparfalse}
\newif\ifparagraph@ftind
\newif\ifparagraph@ftpar
\def\paragraphbeforeskip#1{\def\p@rbeforeskip{#1}}
\def\paragraphstyle#1{\def\p@rstyl@{#1}}
\def\paragraphdot#1{\def\paragraphd@t{#1}}
\def\paragraphafterskip#1{\def\p@rafterskip{#1}}
\def\paragraphintrostyle#1{\def\paragraphintr@styl@{#1}}
\def\paragraphintro#1{\def\paragraphintr@{#1}}
\def\paragraphintrodot#1{\def\paragraphintr@dot{#1}}
\def\paragraphintrosep#1{\def\paragraphintr@sep{#1}}
\def\paragraphindenttrue{\def\p@rind{\parindent}}
\def\paragraphindentfalse{\def\p@rind{\z@}}
\def\paragraphafterindenttrue{\paragraph@ftindtrue}
\def\paragraphafterindentfalse{\paragraph@ftindfalse}
\def\paragraphafternewlinetrue{\paragraph@ftpartrue}
\def\paragraphafternewlinefalse{\paragraph@ftparfalse}
\newif\ifsubparagraph@ftind
\newif\ifsubparagraph@ftpar
\def\subparagraphbeforeskip#1{\def\sp@rbeforeskip{#1}}
\def\subparagraphstyle#1{\def\sp@rstyl@{#1}}
\def\subparagraphdot#1{\def\subparagraphd@t{#1}}
\def\subparagraphafterskip#1{\def\sp@rafterskip{#1}}
\def\subparagraphintrostyle#1{\def\subparagraphintr@styl@{#1}}
\def\subparagraphintro#1{\def\subparagraphintr@{#1}}
\def\subparagraphintrodot#1{\def\subparagraphintr@dot{#1}}
\def\subparagraphintrosep#1{\def\subparagraphintr@sep{#1}}
\def\subparagraphindenttrue{\def\sp@rind{\parindent}}
\def\subparagraphindentfalse{\def\sp@rind{\z@}}
\def\subparagraphafterindenttrue{\subparagraph@ftindtrue}
\def\subparagraphafterindentfalse{\subparagraph@ftindfalse}
\def\subparagraphafternewlinetrue{\subparagraph@ftpartrue}
\def\subparagraphafternewlinefalse{\subparagraph@ftparfalse}
\let\@partoken\par
\long\def\@@gobble#1{}
\def\ignorepar{\@ifnextchar\@partoken{\expandafter\ignorepar\@@gobble}{\ignorespaces}}
\def\@startsection#1#2#3#4#5#6{
   \@tempskipa #4\relax
   \csname if#1@ftind\endcsname\@afterindenttrue\else\@afterindentfalse\fi
   \advance\@tempskipa by\presection
   \if@nobreak \everypar{}\else
     \addpenalty{\@secpenalty}\addvspace{\@tempskipa}%
     \allowbreak\vskip -\presection \fi \@ifstar
     {\@ssect{#1}{#2}{#3}{#4}{#5}{#6}}{\@dblarg{\@sect{#1}{#2}{#3}{#4}{#5}{#6}}}}
\def\@sect#1#2#3#4#5#6[#7]#8{\def\object@type{#1}%
   \ifnum #2>\c@secnumdepth\def\@svsec{}\def\@tempb{}%
      \else\refstepcounter{#1}\def\@svsec{{\csname #1intr@styl@\endcsname%
        {\csname #1intr@\endcsname}\csname the#1\endcsname%
        \csname #1intr@dot\endcsname\kern\csname #1intr@sep\endcsname}}%
        \edef\@tempb{\noexpand\numberline{\csname the#1\endcsname}}\fi%
   \def\@tempa{\addcontentsline{toc}{#1}}%
   \csname if#1@ftpar\endcsname%
      \begingroup #6\relax%
        \@hangfrom{\hskip #3\relax\@svsec}{\interlinepenalty \@M{#8}%
        \csname #1d@t\endcsname\par}%
      \endgroup%
      \csname #1mark\endcsname{#7}%
      \expandafter\@tempa\expandafter{\@tempb #7}%
      \ifautolabel\label*{#8}\fi%
   \else%
      \def\@svsechd{#6\hskip #3\relax%
         \@svsec{#8}%
         \csname #1d@t\endcsname%
         \csname #1mark\endcsname{#7}%
         \expandafter\@tempa\expandafter{\@tempb #7}%
         \ifautolabel\label*{#8}\fi}\fi%
   \@xsect{#1}{#5}\ignorepar}
\def\@ssect#1#2#3#4#5#6#7{%
   \ifnum #2>\c@secnumdepth\def\@tempb{}\else \def\@tempb{\numberline{}}\fi%
     \def\@tempa{\addcontentsline{toc}{s#1}}%
     \csname if#1@ftpar\endcsname
        \begingroup #6\relax
           \@hangfrom{\hskip #3}{\interlinepenalty \@M{#7}%
           \csname #1d@t\endcsname\par}%
        \endgroup
        \csname s#1mark\endcsname{#7}%
        \ifstarredcontents\expandafter\@tempa\expandafter{\@tempb #7}\fi%
        \ifautolabel\label*{#7}\fi%
     \else%
        \def\@svsechd{#6\hskip #3\relax{#7}%
        \csname #1d@t\endcsname%
        \csname s#1mark\endcsname{#7}%
        \ifautolabel\label*{#7}\fi}\fi
   \@xsect{#1}{#5}\ignorepar}
\def\@xsect#1#2{
   \csname if#1@ftpar\endcsname 
       \par \nobreak \vskip #2\relax \@afterheading
    \else \global\@nobreakfalse \global\@noskipsectrue
       \everypar{\if@noskipsec \global\@noskipsecfalse
                   \clubpenalty\@M \hskip -\parindent
                   \begingroup \@svsechd \endgroup \unskip
                   \hskip #2\relax  
                  \else \clubpenalty \@clubpenalty
                    \everypar{}\fi}\fi\ignorespaces}
\def\section{\@startsection{section}{1}{\s@ctind}
  {\s@ctbeforeskip}{\s@ctafterskip}{\s@ctstyl@}}
\def\subsection{\@startsection{subsection}{2}{\ss@ctind}
  {\ss@ctbeforeskip}{\ss@ctafterskip}{\ss@ctstyl@}}
\def\subsubsection{\@startsection{subsubsection}{3}{\sss@ctind}
  {\sss@ctbeforeskip}{\sss@ctafterskip}{\sss@ctstyl@}}
\def\paragraph{\@startsection{paragraph}{4}{\p@rind}
  {\p@rbeforeskip}{\p@rafterskip}{\p@rstyl@}}
\def\subparagraph{\@startsection{subparagraph}{4}{\sp@rind}
  {\sp@rbeforeskip}{\sp@rafterskip}{\sp@rstyl@}}
\def\statementabove#1{\def\th@bove{#1}}
\def\statementstyle#1{\def\thstyl@{#1}}
\def\statementbelow#1{\def\thb@low{#1}}
\def\statementindentfalse{\let\thind@nt\relax}
\def\statementindenttrue{\let\thind@nt\indent}
\def\statementintrostyle#1{\def\thintr@style{#1}}
\def\statementintrodot#1{\def\thintr@dot{#1}}
\def\statementintrosep#1{\def\thintr@sep{#1}}
\def\statementintrobrackets#1#2{\def\thintr@left{#1}\def\thintr@right{#2}}
\def\@thskip{\dimen100\lastskip\vskip-\dimen100%
  \th@bove\dimen101\lastskip\vskip-\dimen101%
  \ifdim\dimen100>\dimen101\else\dimen100\dimen101\fi\vskip\dimen100\vskip0pt}
\long\def\@@newtheorem#1#2#3{%
  \newenvironment{#3}%
    {\def\object@type{#3}\par\@thskip%
     \@ifnextchar[{\@enva{#3}{\thstyl@#1{#2}}}{\@envb{#3}{\thstyl@#1{#2}}}}%
    {\end{#3@}}%
  \@ifnextchar[{\@nothm{#3}}{\@nnthm{#3}}}
\def\@nothm#1[#2]#3{%
  \@ifundefined{c@#2}{\@latexerr{No theorem environment `#2' defined}\@eha}%
  {\expandafter\@ifdefinable\csname #1@\endcsname
  {\global\@namedef{the#1}{\@nameuse{the#2}}%
   \global\@namedef{c@#1}{\@nameuse{c@#2}}
   \global\@namedef{p@#1}{\@nameuse{p@#2}}
   \global\@namedef{#1@}{\@nnnthm{#2}{#3}}%
   \global\@namedef{end#1@}{\@endtheorem}}}}
\def\@nnnthm#1#2{\refstepcounter
    {#1}\@ifnextchar[{\@ynnnthm{#1}{#2}}{\@xnnnthm{#1}{#2}}}
\def\@xnnnthm#1#2{\@begintheorem{#2}{\csname the#1\endcsname}\ignorespaces}
\def\@ynnnthm#1#2[#3]{\@opargbegintheorem{#2}{\csname the#1\endcsname}{#3}\ignorespaces}
\def\renewtheorem{\@ifnextchar[{\@renewtheorem}{\@renewtheorem[{}{}]}}
\long\def\@renewtheorem[#1]{\@@renewtheorem#1}
\long\def\@@renewtheorem#1#2#3{%
  \expandafter\let\csname#3@\endcsname\undefined
  \renewenvironment{#3}%
    {\def\object@type{#3}\par\@thskip%
     \@ifnextchar[{\@enva{#3}{\thstyl@#1{#2}}}{\@envb{#3}{\thstyl@#1{#2}}}}%
    {\end{#3@}}%
  \@ifnextchar[{\@nothm{#3}}{\@nnthm{#3}}}
\def\@begintheorem#1#2{\@opargbegintheorem{#1}{#2}{}}
\def\@opargbegintheorem#1#2#3{%
        \edef\@tempx{#1}%
        \expandafter\let\expandafter\@tempy#2
        \def\@tempz{#3}%
        \mytrivlist\item[\thind@nt\hskip\labelsep%
        {\thintr@style%
          #1\ifx\@tempx\@empty\else\ifx\@tempy\relax\else\kern1ex\fi\fi#2%
          \ifx\@tempz\@empty%
            \ifx\@tempx\@empty\ifx\@tempy\relax%
            \else\thintr@dot\thintr@sep\fi\else\thintr@dot\thintr@sep\fi%
            \else%
            \ifx\@tempx\@empty\ifx\@tempy\relax\else\kern1ex\fi\else\kern1ex\fi%
           \thintr@left{#3}\thintr@right\thintr@dot\thintr@sep\fi}%
            \hskip-\labelsep]%
        \ifautolabel\label*{#3}\fi}
\def\@endtheorem{\endtrivlist\thb@low}
\def\proofname{Proof}
\def\proofabove#1{\def\pf@bove{#1}}
\def\proofstyle#1{\def\pfstyl@{#1}}
\def\proofbelow#1{\def\pfb@low{#1}}
\def\proofindentfalse{\let\pfind@nt\relax}
\def\proofindenttrue{\let\pfind@nt\indent}
\def\proofintrostyle#1{\def\pfintr@style{#1}}
\def\proofintrodot#1{\def\pfintr@dot{#1}}
\def\proofintrosep#1{\def\pfintr@sep{#1}}
\def\proofintrobrackets#1#2{\def\pfintr@left{#1}\def\pfintr@right{#2}}
\def\@pfskip{\dimen100\lastskip\vskip-\dimen100%
  \pf@bove\dimen101\lastskip\vskip-\dimen101%
  \ifdim\dimen100>\dimen101\else\dimen100\dimen101\fi\vskip\dimen100\vskip0pt}
\renewenvironment{proof}%
  {\@pfskip\mytrivlist\item[\pfind@nt]\@ifnextchar[{\pro@f}{\pro@f[\prooftag]}}
  {\ifvoid\provedbox\else\hproved\fi\endtrivlist\pfb@low}
\def\pro@f[#1]{\setbox\provedbox\hbox{\provedboxcontents{#1}}\proofintro{#1}}
\def\proofintro#1{\expandafter\def\expandafter\@tempa\expandafter{#1}%
  {\pfintr@style{\proofname\ifx\@tempa\empty\else\kern1ex\pfintr@left{#1}%
  \pfintr@right\fi}\pfintr@dot\pfintr@sep}\pfstyl@\ignorespaces}
\def\provedmark#1{\def\prm@rk{#1}}
\def\provedsep#1{\def\prs@p{#1}}
\def\provedtexttrue{\def\prb@x##1{\fbox{\small##1}}}
\def\provedtextfalse{\def\prb@x##1{\prm@rk}}
\def\provedmarkrighttrue{\let\prhf@l\hfill}
\def\provedmarkrightfalse{\let\prhf@l\relax}
\def\provedboxcontents#1{\expandafter\def\expandafter\@tempa\expandafter{#1}%
  \ifx\@tempa\empty\prm@rk\else\prb@x{#1}\fi}
\def\proved{\ifmmode\eqno{\box\provedbox}\else\hproved\fi}
\def\hproved{\unskip\nobreak\prhf@l\penalty50\prs@p\hbox{}\nobreak\prhf@l
  \box\provedbox{\finalhyphendemerits=0\par}}
\def\captionstyle#1{\def\c@ptstyl@{#1}}
\def\captionintrostyle#1{\def\c@pintr@style{#1}}
\def\captionintrodot#1{\def\c@pintr@dot{#1}}
\def\captionintrosep#1{\def\c@pintr@sep{#1}}
\long\def\@makecaption#1#2{%
    \vskip\captionskip
    \setbox\@tempboxa\hbox{%
      \ifproofing\@ifundefined{the@label}{}
        {\hbox to 0pt{\vbox to 0pt{\vss\hbox{\tiny\the@label}\bigskip}\hss}}\fi
      \c@ptstyl@{\c@pintr@style #1\c@pintr@dot}\ignorespaces #2}%
    \@captionwidth=\hsize \advance\@captionwidth-2\@captionmargin
    \ifdim \wd\@tempboxa >\@captionwidth {%
        \rightskip=\@captionmargin\leftskip=\@captionmargin
        \unhbox\@tempboxa\par}%
      \else
        \hbox to\hsize{\hfil\box\@tempboxa\hfil}%
    \fi}
\def\end@Float#1{%
  \expandafter\caption\expandafter[\the@title]{%
   {\c@pintr@style%
   \ifx\the@caption\empty\ifx\the@title\empty
   \else\c@pintr@sep\fi\else\c@pintr@sep\fi
    \the@title\ifx\the@caption\empty%
     \expandafter\label\expandafter*\expandafter{\the@label}%
    \else\ifx\the@title\empty%
     \expandafter\label\expandafter*\expandafter{\the@label}%
    \else\c@pintr@dot\c@pintr@sep%
     \expandafter\label\expandafter*\expandafter{\the@label}\fi\fi}%
   \ignorespaces\the@caption}%
  \end{#1}}
\renewenvironment{Figure}{\@ifnextchar[%
  {\@myFloat{figure}}{\@myFloat{figure}[htbp]}}{\end@Float{figure}}
\def\@myFloat#1[#2]#3{%
  \def\color@hbox{}\def\color@vbox{}\def\color@endbox{}%
  \begin{#1}[#2]\def\the@label{#3}}
\def\fig#1{\@ifnextchar[{\@fig{#1}}{\@fig{#1}[0pt]}}
\def\@fig#1[#2]#3{\@ifnextchar[{\@@fig{#1}[#2]{#3}}{\@@fig{#1}[#2]{#3}[0pt]}}
\def\@@fig#1[#2]#3[#4]#5#6{%
  \def\the@title{#5}\def\the@caption{#6}\centerline{\fig@{#1}{#2}{#3}}\vskip#4}
\def\fig@@#1#2#3{\leavevmode{\figstyl@\vrule width 0pt height 1.8ex%
 \smash{\framebox{\strut\def\@temp{#1}\ifx\@temp\@empty{ #3 }\else{ #1 }\fi}}}}
\def\fig@@@#1#2#3{\leavevmode\kern#2\epsfbox{#3}}
\def\figstyle#1{\def\figstyl@{#1}}
\newcounter{diagram}
\let\thediagram\theequation
\def\ftype@diagram{2}
\def\ext@diagram{lod}
\def\diagram{\@float{diagram}}
\let\enddiagram\end@float
\newif\if@diagnum
\def\diag#1{\@ifnextchar[{\@diag{#1}}{\@diag{#1}[0pt]}}
\def\@diag#1[#2]#3{\@ifnextchar[{\@@diag{#1}[#2]{#3}}{\@@diag{#1}[#2]{#3}[0pt]}}
\def\@@diag#1[#2]#3[#4]#5{
  \def\the@tag{#5}\@eqnswtrue%
  \centerline{\setbox0\hbox{\diag@{#1}{#2}{#3}}
  \dimen0 -0.5\wd0\dimen1 0.5\ht0\box0%
  \advance\dimen0 0.5\hsize\advance\dimen0 -\rightskip\advance\dimen1 #4%
  \let\@currentlabel\the@tag%
  \setbox0\hbox to 0pt{\hss%
    \fontfamily{cmr}\fontshape{n}\fontseries{m}\selectfont(\the@tag)}%
  \ifx\the@tag\@empty\refstepcounter{equation}\let\@currentlabel\theequation%
    \setbox0\hbox to 0pt{\hss%
      \fontfamily{cmr}\fontshape{n}\fontseries{m}\selectfont(\thediagram)}\fi%
  \if@eqnsw\else\let\@currentlabel\relax\setbox0\hbox to 0pt{}\fi%
  \advance\dimen1 -0.5\ht0%
  \put[\dimen0,\dimen1][l]{%
    \box0\expandafter\label\expandafter*\expandafter{\the@label}\kern0.15em}}}
\def\diag@@#1#2#3{\leavevmode{\diagstyl@\vrule width 0pt height 1.8ex%
 \smash{\framebox{\strut\def\@temp{#1}\ifx\@temp\@empty{ #3 }\else{ #1 }\fi}}}}
\def\diag@@@#1#2#3{\leavevmode\kern#2\epsfbox{#3}}
\def\diagstyle#1{\def\diagstyl@{#1}}
\def\showfiguresfalse{\let\fig@\fig@@}
\def\showfigurestrue{\let\fig@\fig@@@}
\def\showdiagramsfalse{\let\diag@\diag@@}
\def\showdiagramstrue{\let\diag@\diag@@@}
\def\n@number{\@eqnswfalse\let\@currentlabel\relax\let\the@tag\relax}
\def\equation{$$
  \@eqnswtrue\def\object@type{equation}\let\nonumber\n@number%
  \advance\c@equation1\edef\@currentlabel{\theequation}\advance\c@equation-1%
  \def\the@tag{\refstepcounter{equation}\eqno\hbox{\@eqnnum}}}
\def\tag#1{\edef\@currentlabel{#1}\def\the@tag{\eqno\hbox{\reset@font\rm(#1)}}}
\def\endequation{\the@tag$$
  \global\@ignoretrue}
\let\it@m\item
\def\item{\@ifnextchar[{\item@}{\item@@}}
\def\item@[#1]{\it@m[#1]\vskip-\lastskip\vskip\itemsep}
\def\item@@{\it@m\vskip-\lastskip\vskip\itemsep}
\def\s@titemsep{\@ifnextchar[{\s@@titemsep}{\relax}}
\def\s@@titemsep[#1]{\itemsep#1}
\let\@itemize\itemize
\let\@enditemize\enditemize
\let\@enumerate\enumerate
\let\@endenumerate\endenumerate
\let\@description\description
\let\@enddescription\enddescription
\def\thebibliography#1{%
 \section*{\refname}\vskip-\lastskip%
 \list{[\arabic{bibenumi}]}{\topsep0pt\settowidth\labelwidth{[#1]}%
 \leftmargin\labelwidth\advance\leftmargin\labelsep\usecounter{bibenumi}}%
 \def\newblock{\hskip .11em plus .33em minus .07em}%
 \sloppy\clubpenalty4000\widowpenalty4000\sfcode`\.=1000\relax}
\let\@ref@\ref
\let\@pageref@\pageref
\let\@fullref@\fullref
\let\@Fullref@\Fullref
\let\@reftype@\reftype
\let\@Reftype@\Reftype
\let\@label@\label
\let\@cite@\cite
\let\@bibitem@\bibitem
\def\label{\@ifnextchar*{\label@}{\label@{}}}
\def\label@#1#2{\@label@#1{#2}\putl@bel{#2}\ignorespaces}
\def\putl@b@l#1{\put[0pt,.25\baselineskip]{%
  \hbox{\labc@lor{\fontfamily{cmr}\fontshape{n}\fontseries{m}\selectfont%
  \tiny\setbox5\hbox{\vphantom{X}\smash{\ns#1}}%
  \hbox to 0pt{\hss\tiny$\blacktriangledown$\kern-.085em}%
  \raise2.25ex\hbox to 0pt{\hss\framebox{\box5}}}}}}
\def\putr@fl@bel#1{{\let\labc@lor\refc@lor\putl@bel{#1}}}
\def\ref@#1{\@ref@{#1}\putr@fl@bel{#1}}
\def\pageref@#1{\@pageref@{#1}\putr@fl@bel{#1}}
\def\fullref@#1{\@fullref@{#1}\putr@fl@bel{#1}}
\def\Fullref@#1{\@Fullref@{#1}\putr@fl@bel{#1}}
\def\reftype@#1{\@reftype@{#1}\putr@fl@bel{#1}}
\def\Reftype@#1{\@Reftype@{#1}\putr@fl@bel{#1}}
\def\ref@@#1{\leavevmode\refc@lor{\rm$\langle$#1$\rangle$}}
\let\pageref@@\ref@@
\let\Fullref@@\ref@@
\let\fullref@@\ref@@
\let\reftype@@\ref@@
\let\Reftype@@\ref@@
\def\bibitem{\@ifnextchar[{\bibitem@@}{\bibitem@@@}}
\def\bibitem@@[#1]#2{\@bibitem@[#1]{#2}\putl@bel{#2}}
\def\bibitem@@@#1{\@bibitem@{#1}\putl@bel{#1}\ignorespaces}
\def\cit@{\@ifnextchar[{\@cit@@@}{\@cit@@}}
\def\@cit@@#1{\@cite@{#1}{\let\labc@lor\citc@lor\putl@bel{#1}}}
\def\@cit@@@[#1]#2{\@cite@[#1]{#2}{\let\labc@lor\citc@lor\putl@bel{#2}}}
\def\cit@@{\@ifnextchar[{\cit@@@@}{\cit@@@}}
\def\cit@@@#1{\leavevmode{\citc@lor\rm[#1]}}
\def\cit@@@@[#1]#2{\leavevmode{\citc@lor\rm[#2, #1]}}
\def\showcitationstrue{\let\cite\cit@}
\def\showcitationsfalse{\let\cite\cit@@}
\def\showreferencestrue{%
  \let\ref\ref@\let\pageref\pageref@%
  \let\fullref\fullref@\let\Fullref\Fullref@%
  \let\reftype\reftype@\let\Reftype\Reftype@}
\def\showreferencesfalse{%
  \let\ref\ref@@\let\pageref\pageref@@%
  \let\fullref\fullref@@\let\Fullref\Fullref@@%
  \let\reftype\reftype@@\let\Reftype\Reftype@@}
\def\showlabelstrue{\let\putl@bel\putl@b@l}
\def\showlabelsfalse{\let\putl@bel\hid@@}
\def\postit@{\@ifnextchar[{\postit@@}{\p@tp@stit}}
\def\postit@@[#1]{\postit@@@#1,@}
\def\postit@@@#1,{\@ifnextchar{@}{\p@@tp@stit{#1}}{\postit@@@@#1,}}
\def\postit@@@@#1,#2,@{\p@@@tp@stit{#1}{#2}}
\long\def\p@tp@stit#1{\put[0pt,.25\baselineskip]{%
  \hbox{\postitc@lor{\fontfamily{cmr}\fontshape{n}\fontseries{m}\selectfont%
  \tiny\setbox5\hbox{\vphantom{X}\smash{\ns#1}}%
  \hbox to 0pt{\hss\tiny$\blacktriangledown$\kern-.085em}%
  \raise2.25ex\hbox to 0pt{\hss\framebox{\box5}}}}}}
\long\def\p@@tp@stit#1@#2{\put[0pt,.25\baselineskip]{%
  \hbox{\postitc@lor{\fontfamily{cmr}\fontshape{n}\fontseries{m}\selectfont%
  \tiny\setbox5\hbox{\vbox{\hsize#1\leftskip\z@\raggedright
  \parindent\z@{\ns#2\par}\vss}}%
  \hbox to 0pt{\hss\tiny$\blacktriangledown$\kern-.085em}%
  \raise2.25ex\hbox to 0pt{\hss\framebox{\box5}}}}}}
\long\def\p@@@tp@stit#1#2#3{\put[0pt,.25\baselineskip]{%
  \hbox{\postitc@lor{\fontfamily{cmr}\fontshape{n}\fontseries{m}\selectfont%
  \tiny\setbox5\hbox{\vbox to #2{\hsize#1\leftskip\z@\raggedright
  \parindent\z@{\ns#3\par}\vss}}%
  \hbox to 0pt{\hss\tiny$\blacktriangledown$\kern-.085em}%
  \raise2.25ex\hbox to 0pt{\hss\framebox{\box5}}}}}}
\def\postitc@lor{\color{postitcolor}}
\def\showpostittrue{\let\postit\postit@}
\def\showpostitfalse{\let\postit\hid@@@}
\long\def\hid@@#1{\ignorespaces}
\def\hid@@@{\@ifnextchar[{\hid@@@@}{\hid@@}}
\long\def\hid@@@@[#1]{\hid@@}
\newtheorem{stat}{\statname}  \unnumbered{stat}
\newtheorem{nstat}{\nstatname}[section]
\newtheorem[{\ns}{}]{definition}[nstat]{Definition}
\newtheorem{lemma}[nstat]{Lemma}
\newtheorem{proposition}[nstat]{Proposition}
\newtheorem{theorem}[nstat]{Theorem}
\newtheorem{corollary}[nstat]{Corollary}
\newtheorem[{\ns}{}]{exercise}[nstat]{Exercise}
\newtheorem[{\ns}{}]{example}[nstat]{Example}
\newtheorem[{\ns}{}]{remark}[nstat]{Remark}
\let\ns\normalshape
\def\(#1\){$(${\sl #1}\/$)$}
\def\calstyle#1{{\text{\ft{eusm10}#1}}}
\newcommand{\Int}{\mathop{\mathrm{Int}}\nolimits} 
\newcommand{\Bd}{\partial} 
\renewcommand{\:}{\,{:}\;}
\newcommand{\Z}{{\mathbb{Z}}}
\newcommand{\spinc}{\mathop{\mathrm{spin}^{\mathrm{c}}}\nolimits}
\newcommand{\Spinc}{\mathop{\mathrm{Spin}^{\mathrm{c}}}\nolimits}
\newcommand{\rot}{\mathop{\mathrm{rot}}\nolimits}
\newcommand{\M}{{\calstyle{M}}}
\newcommand{\F}{{\mathcal{F}}}
\newcommand{\Sc}{{\mathcal{S}}^{\mathrm{c}}}
\newcommand{\s}{{\mathfrak{s}}}
\let\theta\textheta
\def\emph#1{{\sl #1}\/}
\font\ftt cmtt10 at 11pt
\font\fsc cmcsc10 at 12pt
\font\fsl cmsl12 at 12pt
\newcommand{\leftrightwavearrow}{\mathrel{\mkern2mu\put[0pt,0.8pt]{\epsfbox{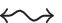}}\mkern24.5mu}}
\begin{document}

\title{\large\bf SPECIAL MOVES FOR OPEN BOOK\\ DECOMPOSITIONS OF 3-MANIFOLDS}
\author{
\fsc Riccardo Piergallini\\
\fsl Scuola di Scienze e Tecnologie\\[-3pt]
\fsl Universit\`a di Camerino -- Italy\\
\ftt riccardo.piergallini@unicam.it
\and
\fsc Daniele Zuddas\\
\fsl Lehrstuhl Mathematik VIII\\[-3pt]
\fsl Mathematisches Institut\\[-3pt]
\fsl Universit\"{a}t Bayreuth -- Germany\\
\ftt zuddas@uni-bayreuth.de}
\date{}

\vglue0pt
\maketitle
\vskip0pt

\begin{abstract}
\baselineskip13.5pt

\vglue12pt
\noindent
We provide a complete set of two moves that suffice to relate any two open book decompositions of a given 3-manifold. One of the moves is the usual plumbing with a positive or negative Hopf band, while the other one is a special local version of Harer's twisting, which is presented in two different (but stably equivalent) forms. Our approach relies on 4-dimensional Lefschetz fibrations, and on 3-dimensional contact topology, via the Giroux-Goodman stable equivalence theorem for open book decompositions representing homologous contact structures.

\smallskip

\medskip\smallskip\noindent
{\sl Key words and phrases}\/: open book decomposition, fibered knot, fibered link,\break 3-manifold, Lefschetz fibration.

\medskip\noindent
{\sl 2010 Mathematics Subject Classification}\/: 57N12, 55R55, 57R17.
\end{abstract}

\section*{Introduction}

An \emph{open book decomposition} of a closed, connected, oriented 3-manifold $M$ consists of a smooth family $\F = \{F_\theta\}_{\theta\in S^1}$ of compact, connected surfaces $F_\theta \subset M$, called the \emph{pages} of the open book, having the same boundary $\Bd F_\theta = L$, which is a smooth link $L \subset M$ with $b \geq 1$ components called the \emph{binding} of the open book. The interiors of the pages are required to be pairwise disjoint, so that the map $\phi \: M - L \to S^1$ defined by $\phi(\Int F_\theta) = \theta$ is a locally trivial fiber bundle over the circle. Therefore, the pages $F_\theta$ are canonically co-oriented and diffeomorphic to each other, and their genus $g \geq 0$ is called the \emph{genus} of the decomposition.

A classical result of Alexander \cite{A1923} guarantees that every closed, connected, oriented 3-manifold $M$ admits an open book decomposition. Further, Myers \cite{My78} and Gonz\'alez-Acu\~na \cite{GA74} independently proved that the binding can be assumed to be connected. On the other hand, the pages can be assumed to be planar, that is of genus $g = 0$, but without any bound on the number of the binding components (see Rolfsen \cite[Chapter 10K]{Ro90}).

Moreover, Harer \cite{H82} showed how to relate any two open book decompositions of the same 3-manifold, in terms of ambient isotopy and certain operations: {\sl stabilization} and \emph{twisting}. More recently, Giroux and Goodman \cite{GG06} proved that twisting is not needed, in other words the two decompositions are \emph{stably equivalent}, if and only if the oriented plane fields given by the associated contact structures are homologous, that is they are homotopic in the complement of a point. In particular, all the open book decompositions of a given integral homology 3-sphere are stably equivalent.

Every open book decomposition of a 3-manifold $M$ as above, can be realized as the boundary of a (possibly achiral) \emph{Lefschetz fibration} $f \: W \to B^2$, whose regular fiber is a connected surface of genus $g$ with $b$ boundary components, and whose total space $W$ turns out to be a \emph{4-dimensional 2-handlebody}, that is a handlebody with handles of indices $\leq 2$ (see Section \ref{lf/sec} for the definition). Namely, $M = \Bd W$ and $F_\theta = f^{-1}(R_\theta) \cap \Bd W$, with $R_\theta \subset B^2$ the radius of the disk ending at $\theta \in S^1$ (hence, $L = f^{-1}(0) \cap \Bd W$ and $\phi(x) = f(x)/\|f(x)\|$ for every $x \in M - L$). 

In \cite{APZ13}, Apostolakis and we proved that two Lefschetz fibrations over the disk have total spaces that are equivalent up to \emph{2-deformation} (handle slidings and births/deaths of pairs of complementary handles of index $\leq 2$), if and only if they are equivalent up to \emph{Hopf stabilizations} $S_\pm$ and certain moves $T$ and $U$ (see Theorem\break \ref{lf/thm}). Moreover, with two extra moves $P$ and $Q$ that realize, respectively, the connected sum with $CP^2$ and the surgery operation of 1/2-handles trading, we can relate any two Lefschetz fibrations whose total spaces have diffeomorphic boundaries.

This provides an alternative approach to the above mentioned Harer's result. In fact, the restrictions of our moves to the boundary $\Bd S_\pm\,,\,\Bd T\,,\,\Bd U$ and $\Bd P$, allow us to relate any two open book decompositions of diffeomorphic 3-manifolds (see Theorem \ref{ob/thm}). The move $\Bd Q$ is not needed, being realizable by ambient isotopy. On the other hand, $\Bd S_\pm$ coincides with the stabilization of open books, while the other boundary moves $\Bd T$ and $\Bd U$ are, up to stabilizations, special instances of Harer's twisting (see Section \ref{rem/sec}).

In this paper, we improve such approach in different directions. First of all, we show that $\Bd P$ can be generated by stabilization of open books (Proposition \ref{P/thm}). Then, based on the above mentioned Giroux-Goodman's result \cite{GG06} and on the effect of move $U$ on the Euler class of a Lefschetz fibration, we prove that, besides stabilization, either a very special case of move $\Bd U$ or a very special case of move $\Bd T$ suffices to relate open book decompositions of diffeomorphic 3-manifolds (Theorems \ref{SU/thm}\break and \ref{ST/thm}). Moreover, for open book decompositions of a given 3-manifold $M$, the moves can be realized as embedded in $M$ and the resulting equivalence of open books can be thought up to ambient isotopy in $M$, not just up to diffeomorphism.

\section{Lefschetz fibrations over the disk\label{lf/sec}}

Given a smooth, compact, connected, oriented 4-manifold $W$ and a compact, connected, oriented surface $S$, both with (possibly empty) boundary, a {\sl Lefschetz\break fibration} $f \: W \to S$ is a smooth map such that: (1) $f$ is regular on the complement of a finite \emph{critical set} $C_f \subset \Int W$ and it is injective on $C_f$; (2) the restriction of $f$ over the complement of the set $f(C_f) \subset \Int S$ of critical values, namely $f_| \: W - f^{-1}(f(C_f)) \to S - f(C_f)$, is a locally trivial bundle with fiber a surface $F$, called the {\sl regular fiber} of $f$; (3) at every critical point, $f$ is locally equivalent, up to orientation-preserving diffeomorphisms, to one of the complex maps $h_\pm \: C^2 \to C$ given by $h_+(z_1, z_2) = z_1 z_2$ and $h_-(z_1,z_2) = z_1 \bar z_2$. It follows that the associated bundle $f_|$ is orientable, and the regular fiber $F$ has a canonical orientation induced by that of $W$ and of $S$. 

A critical point $x \in C_f \subset W$, as well as the corresponding critical value $f(x) \in S$, is said to be \emph{positive} (resp. \emph{negative}) depending on the actual local
model $h_+$ (resp. $h_-$) around it. Then, we say that $f$ is a \emph{positive} Lefschetz fibrations if all of its critical points (and values) are positive. Otherwise, $f$ is said to be \emph{achiral}.

Two Lefschetz fibrations $f \: W \to S$ and $f' \: W' \to S'$ are said to be \emph{equivalent} if there exist orientation-preserving diffeomorphisms $h \: W \to W'$ 
and $k \: S \to S'$ such that $f' \circ h = k \circ f$. Moreover, if $W' = W$, $S' = S$, and both $h \: W \to W$ and $k \: S \to S$ are isotopic to the identity, then $f$ and $f'$ are said to be \emph{isotopic}.

\medskip

\emph{Throughout this paper, we only consider (possibly achiral) Lefschetz fibrations over the disk $B^2$ whose regular fiber $F$ has non-empty boundary. In this case, $F$ is always connected.}

\medskip

Let $f \: W \to B^2$ be a Lefschetz fibration with bounded regular fiber and set of critical values $f(C_f) = \{a_1,\dots,a_n\} \subset \Int B^2$. Once a base point $\ast \in S^1$ and an identification $F_\ast = f^{-1}(\ast) \cong F_{g,b}$ are fixed, with $F_{g,b}$ the standard oriented surface of genus $g \geq 0$ with $b \geq 1$ boundary components, to $f$ is associated a \emph{monodromy representation} $$\omega_f \: \pi_1(B^2 - \{a_1, \dots, a_n\}, \,\ast\,) \to \M_{g,b}\,,$$ where $\M_{g,b}$ is the mapping class group of $F_{g,b}$. Property (3) in the definition of Lefschetz fibrations implies that the monodromy of any positive meridian around a critical value $a_i$ is a Dehn twist, which is positive or negative according to the sign of $a_i$.

If also a {\sl Hurwitz system} for $\{a_1, \dots, a_n\}$ is given, then a specific meridian $\mu_i$ around each $a_i$ turns out to be singled out, in such a way that $\{\mu_1, \dots, \mu_n\}$ is an ordered set of free generators for $\pi_1(B^2 - \{a_1, \dots, a_n\}, \,\ast\,)$ whose product $\mu_1 \cdots \mu_n$ is the positive generator of $\pi_1(S^1,\,\ast\,)$. Then, the monodromy $\omega_f$ can be encoded by the {\sl monodromy sequence} $(\gamma_1 = \omega_f(\mu_1), \dots, \gamma_n = \omega_f(\mu_n))$, where $\gamma_i \in \M_{g,b}$ is (the isotopy class of) a positive or negative, depending on the sign of $a_i$, Dehn twist along a simple closed curve $c_i \subset F_* \cong F_{g,b}$, called the \emph{vanishing cycle} over the critical value $a_i$. It follows that the singular fiber $f^{-1}(a_i)$ is homeomorphic to the quotient space $F_*/c_i$, where the curve $c_i \subset F_*$ is shrunk to a point.

On the other hand, any abstract sequence $(\gamma_1, \dots, \gamma_n)$ of (classes of) positive or negative Dehn twists in $\M_{g,b}$ is the monodromy sequence of a Lefschetz fibration $f \: W \to B^2$ uniquely determined up to equivalence.

It can be proved that two monodromy sequences determine equivalent Lefschetz fibrations if and only if they can be related by a finite sequence of global conjugations and Hurwitz moves, namely transformations of the form
$$
(\gamma_1, \dots, \gamma_i, \gamma_{i+1}, \dots, \gamma_n) \leftrightwavearrow (\gamma_1, \dots, \gamma_i \gamma_{i+1} \gamma_i^{-1}, \gamma_i, \dots, \gamma_n)\,,
$$
which give, in terms of the standard generators, the action of the braid group $B_n = \M(B^2, \{a_1, \dots,\allowbreak a_n\})$ on the set of the Hurwitz systems, composed with the monodromy of the Lefschetz fibration.
\medskip

\emph{In the following, we always assume that Lefschetz fibrations are {\sl relatively minimal}, namely all the vanishing cycles are homotopically non-trivial curves in $F$.} 

\medskip

A more restrictive property of a Lefschetz fibration is allowability, which we will explicitly require when needed. We recall that a Lefschetz fibration is said to be \emph{allowable} if none of its vanishing cycles is null-homologous in the regular fiber. In particular, relatively minimal (resp. allowable) implies that no singular fiber contains a 2-sphere (resp. a closed surface).

\medskip

We conclude this section with the statement of the equivalence theorem for Lefschetz fibrations over $B^2$ that was proved in \cite[Theorem A]{APZ13}. In order to state the theorem, we recall the 4-dimensional instance of the Kas \cite{K80} theorem that a Lefschetz fibration $f \: W \to B^2$ naturally induces a 4-dimensional 2-handlebody structure $H_f$ on the total space $W$. This consists of a single 0-handle, $2g + b - 1$ 1-handles and $n$ 2-handles, where $g \geq 0$ is the genus of the regular fiber and $b \geq 1$ is the number of its boundary components, while $n$ is the number of critical points. The 0-handle and the 1-handles form a tubular neighborhood $N \cong F_\ast \times B^2$ of the regular fiber $F_\ast = f^{-1}(\ast)$, and the 2-handles $H^2_1, \dots, H^2_n$ are attached to $N$ along parallel copies of the vanishing cycles $c_i$ in different fibers, respectively, $F_{*_1}, \dots, F_{*_n} \subset \Bd N$, cyclically ordered by the Hurwitz system, and with framings $-\epsilon_i$ (with respect to the fiber), where $\epsilon_i = \pm 1$ is the sign of the corresponding critical point $a_i$, for $i = 1,\dots, n$ (see also \cite{APZ13} for more details).

\begin{theorem}[\cite{APZ13}]\label{lf/thm}
Two Lefschetz fibrations $f \: W \to B^2$ and $f' \: W' \to B^2$ represent 2-deformation equivalent 4-dimensional 2-handlebodies $H_f$ and $H_{f'}$ if and only if they are related by equivalence, stabilizations $S_\pm$ and moves $T$ and $U$ (and their inverses), described in terms of modifications of the regular fiber and of the monodromy sequence in Figures \ref{stab/fig}, \ref{Tmove/fig} and \ref{Umove/fig}, respectively.
\end{theorem}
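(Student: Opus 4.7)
My plan is to prove both implications of Theorem \ref{lf/thm} by setting up a correspondence between the Lefschetz-fibration moves on the left and the elementary handle operations (slides and birth/death of cancelling pairs) on the right, and then showing that this correspondence is surjective on each side.

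For the easier forward direction, I would verify move by move that each of $S_\pm$, $T$, and $U$ changes $H_f$ only by a 2-deformation. A Hopf stabilization $S_\pm$ enlarges the regular fiber by plumbing on a Hopf band and appends a new Dehn twist along the core curve of the band; combinatorially this introduces exactly one 1-handle together with one 2-handle attached along a curve that crosses the co-core of that new 1-handle once, so the added pair is a cancelling pair and has no effect on the 2-deformation class. For $T$ and $U$ I would track, in the handle picture encoded by the Kas correspondence, the changes to attaching circles and framings of the 2-handles, expressing each move as an explicit finite sequence of 2-handle slides (and, where needed, sliding 2-handles over the 1-handles that come from the fiber's genus). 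Invariance of $H_f$ under Hurwitz moves and global conjugation is already part of the monodromy/equivalence theory, so nothing new is needed there.

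The reverse direction is the substantial content. Starting from two Lefschetz fibrations $f, f'$ such that $H_f$ and $H_{f'}$ are 2-deformation equivalent, I would connect the two handlebodies by a finite chain of elementary moves: handle slides of 1-over-1, 2-over-1, 2-over-2, and birth/death of 1/2-handle cancelling pairs. For each elementary move I then produce a finite sequence of $S_\pm$, $T$, $U$, Hurwitz moves and global conjugations realizing it at the Lefschetz-fibration level. Slides within the 0/1-skeleton, as well as 2-over-1 slides, can be absorbed by changing the trivialization of $N \cong F_\ast \times B^2$, possibly after a preliminary Hopf stabilization that creates enough room in the fiber to route the slide. Birth/death of a 1/2-pair is, by construction, exactly an $S_\pm$ move (or its inverse). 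The delicate case is a 2-over-2 handle slide: two vanishing cycles $c_i, c_j$ must be replaced by $c_i$ and a band-sum of $c_j$ with a parallel pushoff of $c_i$. Using a Hurwitz move I first place the two cycles in adjacent positions in the sequence, then I insert a cancelling pair of Dehn twists of opposite sign along a parallel copy of $c_i$ via an $S_\pm$-stabilization supported in a neighborhood of $c_i$; conjugation by one of these twists realizes the band-sum, and the moves $T$ and $U$ are designed precisely to accommodate the resulting modification of the attaching data while returning the auxiliary cycles to a standard form. A finite iteration then produces the desired 2-over-2 slide.

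The main obstacle is the 2-over-2 slide step, because in the monodromy sequence such a slide is not a local operation: it modifies one Dehn twist while keeping another fixed, which a priori changes the topology of the vanishing cycle globally in the fiber. The whole point of introducing moves $T$ and $U$ (rather than trying to work with just $S_\pm$ and Hurwitz moves) is to localize this nonlocal effect, at the price of first Hopf-stabilizing to enlarge the fiber. I would therefore devote the core of the argument to a careful bookkeeping, analogous to that in \cite{APZ13}, of how many $S_\pm$ stabilizations are needed to bring any given 2-over-2 slide into the local form handled by $T$ and $U$, and to verifying that no extra global relations are required beyond Hurwitz moves and global conjugation. Once this is established, the equivalence theorem follows by composing the translations of all the elementary handle moves in the chain connecting $H_f$ to $H_{f'}$.
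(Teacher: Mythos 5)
First, a point of order: this statement is not proved in the present paper at all. It is Theorem~A of \cite{APZ13}, quoted here as background, so there is no internal proof to compare yours against. Judged on its own terms, your sketch of the forward direction is fine (each of $S_\pm$, $T$, $U$ is indeed checked to be a 2-deformation by Kirby-calculus bookkeeping on the Kas handlebody, and the $S_\pm$ case is exactly the cancelling 1/2-pair you describe). The reverse direction, however, has a genuine gap.

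Your plan is to chain $H_f$ to $H_{f'}$ by elementary handle moves and to translate each elementary move, one at a time, into fibration moves. The problem is that the intermediate handlebody presentations occurring in an arbitrary 2-deformation are not of Kas type: after a generic 2-over-1 or 2-over-2 slide the attaching circle of the modified 2-handle need not lie on any page, and even when it does (e.g.\ a 2-over-2 slide along a band contained in the fiber), its framing becomes $-\epsilon_i-\epsilon_j+2\,\mathrm{lk}$ rather than the fiber framing $\mp 1$ required of a vanishing cycle. So the inductive step ``realize this elementary move at the Lefschetz-fibration level'' is not well-posed: there is no Lefschetz fibration inducing the intermediate handlebody, and your proposal supplies no mechanism for re-braiding or re-fibering after each step. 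Relatedly, 2-over-1 slides cannot be ``absorbed by changing the trivialization of $N$'': a change of trivialization is a global conjugation of the monodromy, whereas such a slide genuinely moves the attaching curve off the fiber. This is precisely why the proof in \cite{APZ13} does not proceed handle move by handle move; it instead encodes allowable Lefschetz fibrations as simple branched coverings of $B^2\times B^2$ branched over braided (ribbon) surfaces and reduces the theorem to the equivalence theorem for branched coverings of $B^4$ representing 2-equivalent 2-handlebodies, extracting $S_\pm$, $T$ and $U$ from the covering moves; the moves $T$ and $U$ arise there from the covering/ribbon calculus, not as gadgets ``designed to accommodate'' a 2-over-2 slide. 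Without either that machinery or a concrete re-fibering procedure after each elementary move, your argument does not close.
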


In the stabilization moves $S_\pm$ shown in Figure \ref{stab/fig}, we add a band to the fiber and a new positive or negative Dehn twist in the monodromy sequence, along a curve that crosses once the new band. The inverse moves are usually called {\sl destabilizations}.

In move $T$, a band as in Figure \ref{Tmove/fig} is removed and glued back differently, along with the two Dehn twists $\gamma_i$ and $\gamma_{i+1}$. The band involved in the move does not meet any other vanishing cycle. Moreover, the signs $\epsilon_j = \pm1$ of the involved Dehn twists, for $j = i,\, i+1$, can be taken arbitrary, but restricting to the case $\epsilon_i = - \epsilon_{i+1}$ suffices.

In move $U$ shown in Figure \ref{Umove/fig}, we consider a disk $D \subset F_*$ disjoint from the vanishing cycles. We remove two small disks in $D$ and add the two boundary parallel and opposite Dehn twists $\gamma_+$ and $\gamma_-$ to the monodromy sequence. The band between them, in the right part of the same figure, does not meet other vanishing cycle.

\begin{Figure}[htb]{stab/fig}
\fig{}{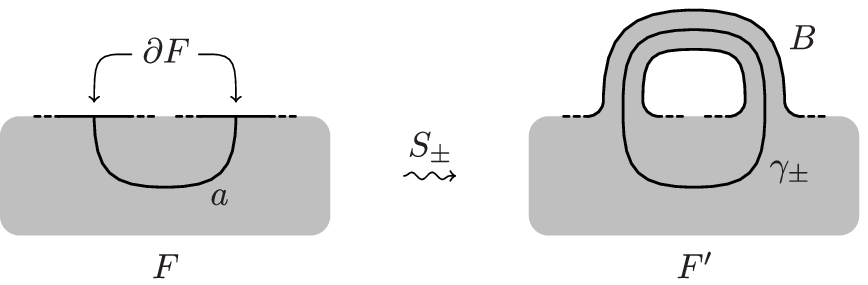}
    {}{Stabilization moves for Lefschetz fibrations. Here, $a$ is an arbitrary proper simple arc in $F$, and $\gamma_\pm$ is the positive or negative Dehn twist along the depicted simple closed curve in $F'$ that extends $a$ across the band $B$, inserted anywhere in the monodromy sequence of the left side.}
\vskip-3pt
\end{Figure}

\begin{Figure}[htb]{Tmove/fig}
\fig{}[12pt]
{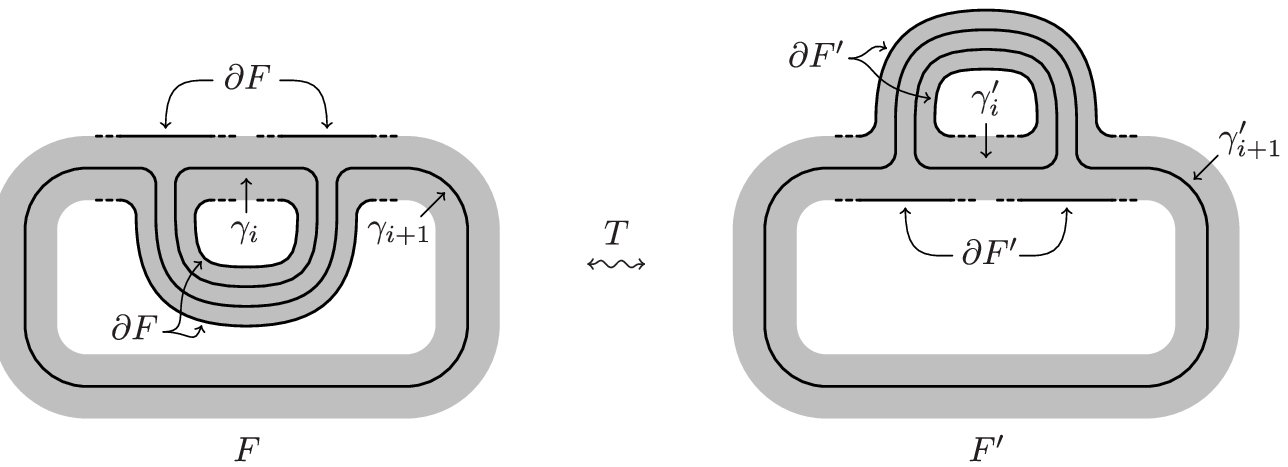}
    {}{The $T$ move for Lefschetz fibrations. The consecutive Dehn twists $\gamma_i$ and $\gamma_{i+1}$ in the monodromy sequence of the left side have opposite sign, and the same holds for the consecutive Dehn twists $\gamma'_i$ and $\gamma'_{i+1}$ in the monodromy sequence of the right side.}
\vskip-3pt
\end{Figure}

\begin{Figure}[htb]{Umove/fig}
\fig{}{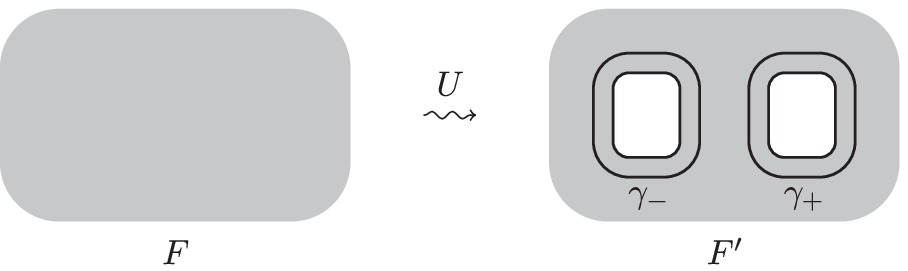}
    {}{The $U$ move for Lefschetz fibrations. The two holes on the right side are contiguous, that is no other vanishing cycle pass between them, and the Dehn twists $\gamma_\pm$ have opposite sign and can be inserted anywhere in the monodromy sequence of the left side.}
\vskip-3pt
\end{Figure}

When dealing with open book decompositions of 3-manifolds thought as boundaries of Lefschetz fibrations, we also need the move $P$ described in the following Figure \ref{Pmove/fig}, and the move $Q$, which inserts (or removes) two consecutive parallel and opposite Dehn twists $\gamma$ and $\gamma^{-1}$ in the monodromy sequence of a Lefschetz fibration.

\begin{Figure}[htb]{Pmove/fig}
\fig{}{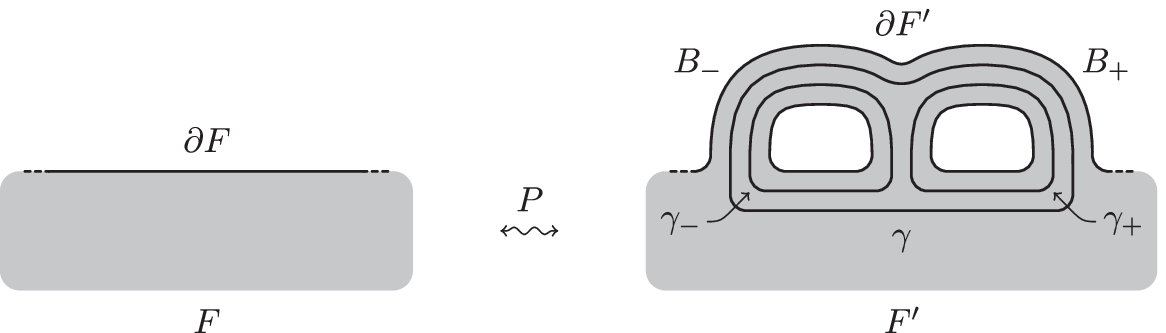}
    {}{The $P$ move for Lefschetz fibrations. The Dehn twists $\gamma_\pm$ have opposite sign, while the sign of the Dehn twist $\gamma$ is arbitrary, and such twists can be inserted anywhere in the monodromy sequence of the left side.}
\vskip-3pt
\end{Figure}

By considering the Kas handlebody decomposition associated to a Lefschetz fibration, it can be proved, by means of Kirby calculus, that the effect of the moves $S_\pm$, $T$ and $U$ on the total 4-manifold consists of handlebody 2-deformation (hence, diffeomorphism), while move $P$ gives connected sum with $CP^2$ and move $Q$ can be suitably used to transform dotted circles in a Kirby diagram into 0-framed circles, up to 2-deformation, see \cite{APZ13} for the details.

\begin{remark}\label{U/rmk}
Among our moves, $U$ and $Q$ are the only ones that make sense even when the fiber of a Lefschetz fibration has no boundary. In this case, the move $U$ still preserves the total 4-manifold up to diffeomorphisms (actually, up to 2-deformation equivalence of 2-handlebodies), while $Q$ preserves the boundary as a fibered 3-manifold.
\end{remark}

\section{Open book decompositions of 3-manifolds\label{ob/sec}}

First of all, we recall that two open book decompositions $\F = \{F_\theta\}_{\theta \in S^1}$ on $M$ and $\F' = \{F'_\theta\}_{\theta \in S^1}$ on $M'$, as defined in the Introduction, are said to be \emph{equivalent} if there exist orientation-preserving diffeomorphisms $h \: M \to M'$ and $k \: S^1 \to S^1$ such that $h(F_\theta) = F'_{k(\theta)}$ for every $\theta \in S^1$, or equivalently $\phi' \circ h = k \circ \phi$, with $\phi \: M - L \to S^1$ and $\phi' \: M' - L' \to S^1$ the associated fiber bundles. Moreover, if $M' = M$ and $h \: M \to M$ is isotopic to the identity, then the two open book decompositions $\F$ and $\F'$ are said to be \emph{isotopic}.

In particular, $\F$ and $\F'$ are equivalent (resp. isotopic) if they are boundaries of equivalent (resp. isotopic) Lefschetz fibrations as described in the Introduction.

Similarly to Lefschetz fibrations, also open book decompositions can be described in terms of their monodromy, defined as follows. For any open book decomposition $\F = \{F_\theta\}_{\theta \in S^1}$ on $M$ with genus $g \geq 0$ and $b \geq 1$ binding components, the associated fiber bundle $\phi \: M - L \to S^1$ can be completed in a canonical way to an $F_{g,b}$-bundle $\widehat\phi \: \widehat M\, \to S^1$, which is trivial on the boundary and whose fiber $\widehat\phi^{-1}(\theta)$ is canonically identified with the page $F_\theta$ for every $\theta \in S^1$. Then, once a base point $\ast \in S^1$ and an identification $F_\ast \cong F_{g,b}$ are chosen, we define the \emph{monodromy} $\omega_{\F} \in \M_{g,b}$ of the open book decomposition $\F$ to be the monodromy of the $F_{g,b}$-bundle $\widehat\phi$. 

The monodromy $\omega_\F$ completely determines the open book decomposition $\F$ up to equivalence. Moreover, if also the inclusion $F_\ast \subset M$ is given, then $\F$ turns out to be determined up to isotopy. 

On the other hand, any $\omega \in \M_{g,b}$ is the monodromy of an open book decomposition $\F_\omega$ of a smooth closed, connected, oriented 3-manifold $M_\omega$ (uniquely determined up to equivalence). In fact, if $f \: F_{g,b} \to F_{g,b}$ is any map in the class $\omega$ that is the identity on the boundary, and $\phi \: F_{g,b} \times R \to F_{g,b} \times R$ is the map defined by $\phi(x,t) = (f(x),t-2\pi)$, we can consider the mapping torus $T(\phi) = (F_{g,b} \times R)/\langle\phi\rangle$, as the quotient of $F_{g,b} \times R$ with respect to the action of the diffeomorphisms group generated by $\phi$. The boundary $\Bd T(\phi)$ can be canonically identified with $\Bd F_{g,b} \times S^1$, so we can define $M_\omega$ as the quotient of $T(\phi)$ obtained by collapsing the $S^1$ fibers of this product.
Then, the open book decomposition $\F_\omega$ has pages $F_\theta = \pi(F_{g,b} \times \{\theta\})$ where $\pi \: F_{g,b} \times R \to M_\omega$ is the canonical projection resulting from the quotients.

It immediately follows from this construction, that the open book decomposition $\F_{\omega^{-1}}$ is equivalent to $\F_\omega$ with reversed orientation, and that $\F_\omega$ is equivalent to $\F_{\psi \omega \psi^{-1}}$ for all $\psi\in \M_{g,b}$.

If an open book decomposition $\F$ is the boundary of a Lefschetz fibration $f \: W \to B^2$ with monodromy sequence $(\gamma_1, \dots, \gamma_n)$, then the monodromy of $\F$ is given by the product $\omega_\F = \gamma_1 \cdots \gamma_n$. In light of the above discussion, any Lefschetz fibration $f \: W \to B^2$ having $\F$ as the boundary
is determined in this way by a factorization of $\omega_\F$ (or a conjugate of it) as a product of Dehn twists. Conversely, any factorization of $\omega_\F$ as a product of Dehn twists determines such a Lefschetz fibration $f$. In particular, we can choose all the Dehn twists in the factorization to be not null-homologous, and hence the Lefschetz fibration $f$ to be allowable.

The following equivalence theorem was proved in \cite[Theorem C]{APZ13}, based on the representation of open book decompositions of 3-manifolds as boundaries of allowable Lefschetz fibrations over the disk.

\begin{theorem}\label{ob/thm}
Any two open book decompositions of a closed, connected, oriented 3-manifold $M$ are related by equivalence and the moves $\partial S_\pm$, $\partial T$ and $\partial P$ (and their inverses), which are the restriction to the boundary of the moves $S_\pm\,,\,T$ and $P$ described in Figures 1, 2 and 4, respectively.
\end{theorem}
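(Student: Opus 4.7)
The plan is to lift the problem from the 3-manifold $M$ up to dimension four, where the powerful equivalence theorem for Lefschetz fibrations (Theorem \ref{lf/thm}) is available, and then to push the resulting sequence of moves back down to the boundary.

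First I would represent each of the given open book decompositions $\F$ and $\F'$ of $M$ as the boundary of an allowable Lefschetz fibration $f \: W \to B^2$ and $f' \: W' \to B^2$. As recalled in Section \ref{ob/sec}, this is always possible: any factorization of the monodromy $\omega_\F$ as a product of non-null-homologous Dehn twists yields such an allowable Lefschetz fibration whose boundary recovers $\F$, and similarly for $\F'$.

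Next, since $\Bd W \cong M \cong \Bd W'$, I would invoke the extension of Theorem \ref{lf/thm} that incorporates also the moves $P$ and $Q$ (mentioned after the statement of that theorem), which together suffice to relate any two Lefschetz fibrations whose total spaces have diffeomorphic boundaries. This supplies a finite sequence of equivalences and moves of type $S_\pm$, $T$, $U$, $P$, $Q$ connecting $f$ to $f'$. I would then restrict this sequence termwise to the boundary and verify that each restriction can be expressed in terms of the moves listed in the statement.

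The bookkeeping for $\Bd S_\pm$, $\Bd T$ and $\Bd P$ is immediate, since these are given directly. The restriction $\Bd Q$ is realizable by an ambient isotopy of $M$, because inserting the cancelling pair $(\gamma,\gamma^{-1})$ in the monodromy sequence only alters the representative of $\omega_\F$ within its mapping class, hence does not change the open book up to isotopy. The hard part, and the main obstacle, is to show that $\Bd U$ is generated by $\Bd S_\pm$ and $\Bd T$ together with ambient isotopy: geometrically, move $U$ punches two holes in a disk $D \subset F_*$ and adds two boundary-parallel Dehn twists $\gamma_\pm$ of opposite sign joined by a band. I would realize this on the boundary by first performing two stabilizations $\Bd S_+$ and $\Bd S_-$ along short arcs in $D$ (which introduce precisely the two new holes and the two opposite Dehn twists) and then applying a $\Bd T$ move to reroute the band as prescribed. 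Concatenating all these realizations yields the required sequence of $\Bd S_\pm$, $\Bd T$, $\Bd P$ moves and ambient isotopies transforming $\F$ into $\F'$.
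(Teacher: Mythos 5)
Your overall strategy---representing $\F$ and $\F'$ as boundaries of allowable Lefschetz fibrations over $B^2$ with diffeomorphic boundaries, relating those by the $S_\pm, T, U, P, Q$ calculus of \cite{APZ13}, and restricting the sequence to the boundary---is indeed the strategy behind \cite[Theorem C]{APZ13}, which is all the paper itself offers for this statement (it cites that result rather than reproving it). Your handling of $\Bd Q$ (a cancelling pair $\gamma,\gamma^{-1}$ does not change the monodromy product, hence not the open book up to isotopy) is correct and matches the remark following Theorem \ref{ob/thm}.

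The gap is in your disposal of $\Bd U$, which you correctly identify as the main obstacle but then settle by assertion. Note first that the paper's stated reason for omitting $\Bd U$ is \emph{not} that it is generated by $\Bd S_\pm$ and $\Bd T$: it is that in Theorem \ref{lf/thm} the move $U$ plays a purely auxiliary role, being used only to make Lefschetz fibrations allowable, so that by working with allowable fibrations throughout (as the refined statements of \cite{APZ13} allow) the move $U$ never needs to appear in the connecting sequence. Your alternative route---expressing $\Bd U$ as two stabilizations followed by a $\Bd T$---is established in this paper (Figure \ref{EulerT/fig}) only for the \emph{special} $\Bd U$ move of Figure \ref{Us/fig}, where the two holes lie next to $\Bd F$ and no other vanishing cycles separate them from the boundary; reducing an arbitrary $U$ move to that special form is precisely the content of Lemma \ref{Euler/thm} combined with the Giroux--Goodman theorem \cite{GG06} in the proof of Theorem \ref{SU/thm}, i.e.\ a substantial argument rather than a local picture. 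Moreover, the mechanism you describe does not do what you claim: a stabilization attaches a band along $\Bd F$ and adds a Dehn twist running once over that band; it does not punch two holes in an interior disk $D$ (which in the general $U$ move may be separated from $\Bd F$ by other vanishing cycles) nor produce the two opposite boundary-parallel twists $\gamma_\pm$ in the required position without the further handle slides and $T$-moves that the paper carries out only in the special case. As written, the key step of your argument is therefore unjustified, and justifying it requires either the allowability refinement of \cite{APZ13} or the machinery of Section 3 of this paper.
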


Notice that move $\Bd U$ is not mentioned in the theorem. Actually, it does not change the manifold $M$ supporting the open book decomposition, being the restriction to the boundary of move $U$, but nevertheless it is not needed here. The reason is that in Theorem \ref{lf/thm} it plays an auxiliary role, being used only to make Lefschetz fibrations allowable.

\section{The equivalence results for open books}

First of all, we show that the move $\Bd P$ can be removed from Theorem \ref{ob/thm}

\begin{Figure}[b]{PfromS/fig}
\vskip3pt
\fig{}{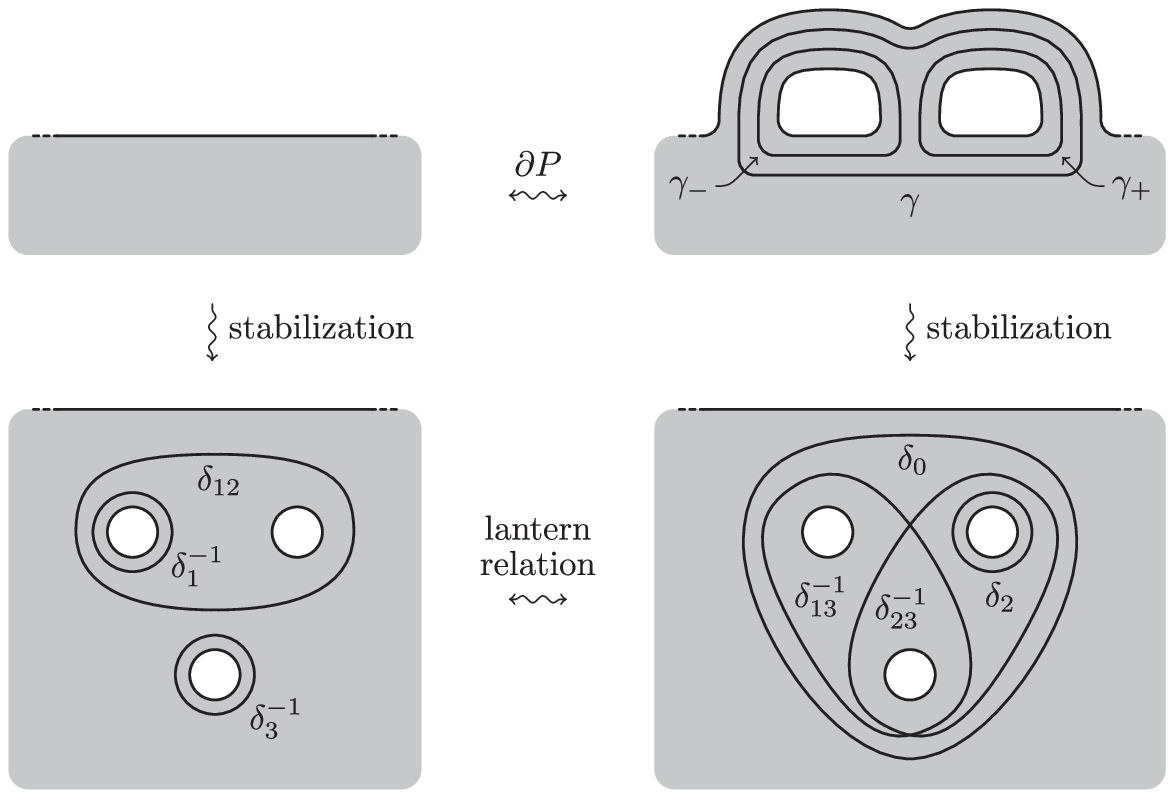}
    {}{The $\Bd P$ move as a consequence of $\Bd S_\pm$.}
\end{Figure}

\begin{proposition}\label{P/thm}
On any closed, connected, oriented 3-manifold $M$, the move $\Bd P$ for open book decompositions on $M$ is a consequence of $\Bd S_\pm$.
\end{proposition}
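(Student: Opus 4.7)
The plan is to realize the boundary move $\partial P$ as a sequence of $\partial S_\pm$ stabilizations, their inverses, and Hurwitz moves, with the classical \emph{lantern relation} in the mapping class group as the crucial algebraic ingredient --- the relevance of which is already indicated by Figure~\ref{PfromS/fig}. Recall that on a four-holed sphere with boundary components $\partial_0, \partial_1, \partial_2, \partial_3$ and three interior simple closed curves $\alpha, \beta, \gamma$ (each separating a pair of boundary components from the complementary pair), the lantern relation asserts
\[
t_\alpha \, t_\beta \, t_\gamma \;=\; t_{\partial_0} \, t_{\partial_1} \, t_{\partial_2} \, t_{\partial_3}
\]
in the relevant mapping class group.

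Starting from the left side of $\partial P$, I would localize the argument in a disk $D$ of the page at the intended location of the $\partial P$ band, and perform three $\partial S_\pm$ stabilizations with signs and attaching arcs chosen so that (i) the three added bands, together with $D$, bound an embedded four-holed sphere $\Sigma$ in the new page, and (ii) after Hurwitz rearrangement, the three stabilization curves coincide with the interior lantern curves $\alpha, \beta, \gamma$ of $\Sigma$. Applying the lantern relation in the monodromy sequence then converts the product of the three stabilization twists into the product of four boundary-parallel twists along the $\partial_i$. Two of the three newly added bands are then removed by inverse stabilizations --- using two of the resulting boundary-parallel twists, after further Hurwitz moves that reposition them into transverse curves suitable for destabilization --- leaving a single band in the page and three Dehn twists in the monodromy matching the $\gamma_+, \gamma_-, \gamma$ of $\partial P$.

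The main obstacle I anticipate is the sign bookkeeping. The $\partial P$ move requires $\gamma_+$ and $\gamma_-$ to have opposite signs, yet the classical lantern relation involves positive twists only. This forces a mix of $\partial S_+$ and $\partial S_-$ in the initial three stabilizations, and the application of an appropriate signed variant of the lantern identity (obtained by inverting the relevant twists on both sides). Verifying that this signed variant still yields the required cancellations, and in particular that the final two inverse stabilizations are legitimate after the lantern substitution, constitutes the technical heart of the proof.
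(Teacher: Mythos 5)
You have correctly identified the key ingredient of the paper's proof --- the lantern relation, realized through stabilizations --- and you rightly anticipate that a signed variant is what actually gets used. However, step (ii) of your plan cannot be carried out. A stabilization curve crosses its own new band exactly once and can only run over bands that already exist, so (ordering the bands by when they are attached) the first curve $c_1$ meets only $B_1$, the second meets $B_2$ once and possibly $B_1$, and the third meets $B_3$ once. Writing $\phi_i$ for the algebraic intersection with the cocore of $B_i$, the matrix $\bigl(\phi_i(c_j)\bigr)$ is triangular with $\pm1$ on the diagonal, so the classes $[c_1],[c_2],[c_3]$ map onto all of $\Z^3$ under $(\phi_1,\phi_2,\phi_3)$. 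On the other hand, $[\delta_{ij}]=[\partial_i]+[\partial_j]$, and since $[\partial_1],[\partial_2],[\partial_3]$ themselves map to a basis of $\Z^3$ (cutting $D\cup B_1\cup B_2\cup B_3$ along the three cocores returns the disk $D$), the three pairwise sums span only an index-two subgroup. Hence no choice of attaching arcs --- and no amount of Hurwitz rearrangement, which only applies transvections within the span of the twist classes --- makes the three stabilization twists run along the three interior lantern curves. Two further problems compound this: the right-hand side of $\partial P$ has two extra holes, i.e.\ local Euler characteristic $\chi(D)-2$, whereas three bands minus two destabilizations leaves $\chi(D)-1$, so your endpoint does not match; and the legitimacy of the final destabilizations, which you correctly flag as the technical heart, is genuinely obstructed whenever a surviving twist (in particular $t_{\partial_0}$, which must become $\gamma$) meets the band being removed.

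The paper's proof distributes the lantern curves differently and never destabilizes. It stabilizes the \emph{left} side three times, producing $\delta_1^{-1}$, $\delta_3^{-1}$, $\delta_{12}$ --- two boundary twists and one interior twist of the lantern, a triple whose classes do span all of $\Z^3$ and which is realizable by nested bands --- and stabilizes the \emph{right} side once, producing $\delta_{23}^{-1}$, with the preexisting $\gamma$, $\gamma_-$, $\gamma_+$ becoming $\delta_0$, $\delta_{13}^{-1}$, $\delta_2$. Both sides then live on the same page, and since an open book is determined by its total monodromy (no Hurwitz bookkeeping is needed, unlike for Lefschetz fibrations), the whole proof reduces to the single identity $\delta_1^{-1}\delta_3^{-1}\delta_{12}=\delta_0\delta_2\delta_{23}^{-1}\delta_{13}^{-1}$, which is the lantern relation after moving the central boundary twists $\delta_1,\delta_3$ to the other side. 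I would encourage you to recast your argument in this ``stabilize both sides to a common open book'' form rather than the ``stabilize, substitute, destabilize'' form.
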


\begin{proof}
In Figure \ref{PfromS/fig} we consider the move $P$ in the case where $\gamma$ is a positive twist, the case of a negative twist being similar with all the twists inverted. Here, we perform a triple stabilization of the open book on the left side and a single stabilization of that on the right side. This last stabilization creates the new twist $\delta_{23}^{-1}$, while the other ones $\delta_0\,,\delta_{13}^{-1}$ and $\delta_2$ are the preexisting twists $\gamma\,,\gamma_-$ and $\gamma_+$, respectively. After that the move reduces to the relation $\delta_1^{-1}\delta_3^{-1}\delta_{12} = \delta_0\delta_2\delta_{23}^{-1} \delta_{13}^{-1}$, which is equivalent to the lantern relation $\delta_{12}\delta_{13} \delta_{23} = \delta_0\delta_1\delta_2\delta_3$ in $\M_{g,b}$, up to commutation of disjoint twists.
\end{proof}

Now, in order to prove our main results, we need to recall some facts concerning $\spinc$ structures on 4-dimensional 2-handlebodies and 3-manifolds and their relations with Lefschetz fibrations and open book decompositions, respectively. The reader can see the book of Ozbagci and Stipsicz \cite{OS2004} as a general reference.

For a smooth oriented (possibly bounded) manifold $M$, we denote by $\Sc(M)$ the set of $\spinc$ structures on $M$. By Gompf \cite{G97}, if $\dim M \geq 3$ then $\Sc(M)$ can be identified with the set of homotopy classes of almost complex structures over the 2-skeleton of (any fixed cellular decomposition of) $M$ that can be extended over the 3-skeleton, after stabilizing $T M$ with a trivial real line bundle if $\dim M$ is odd.

The set $\Sc(M)$ is non-empty if and only if the second Stiefel-Whitney class $w_2(M)$ is in the image of the coefficient homomorphism $H^2(M) \to H^2(M; \Z_2)$. In\break fact, the  Chern class $c_1(\s)$ of any $\s \in \Sc(M)$ is mapped to $w_2(M)$ by such homomorphism. In this case, the group $H^2(M)$ acts freely and transitively on $\Sc(M)$, namely $\Sc(M)$ is a {\sl torsor} over $H^2(M)$. The action of $\lambda \in H^2(M)$ on $\s \in \Sc(M)$ will be denoted by $\s + \lambda \in \Sc(M)$. Then, we have $c_1(\s + \lambda) = c_1(\s) + 2 \lambda$ for every $\s \in \Sc(M)$ and $\lambda \in H^2(M)$. In particular, the map $c_1 \: \Sc(M) \to H^2(M)$ is injective if $H^2(M)$ has no 2-torsion.

Given a (possibly achiral) Lefschetz fibration $f \: W \to B^2$, we can associate to $f$ a $\spinc$ structure $\s_f \in \Sc(W)$, in the following way. Perform a small isotopy making $C_f$ disjoint from the 2-skeleton of $W$. Let $\xi_f \subset TW$ be the oriented distribution on $W - C_f$ tangent to the fibers of $f$. Put a Riemannian metric on $TW$, and let $\xi_f^\perp \subset TW$ be the normal distribution of $\xi_f$ on $W - C_f$. Consider the almost complex structure $J$ on $TW$ over $W - C_f$ that keeps $\xi_f$ and $\xi_f^\perp$ invariant and acts as a $\pi/2$-rotation on both distributions. Then, $\s_f$ is by definition the homotopy class of the restriction of $J$ over the 2-skeleton of $W$, and it is independent on the choice of the small isotopy and of the metric. Since $W$ deformation retracts on its 2-skeleton, $\s_f$ represents a $\spinc$ structure.

Since the normal distribution $\xi_f^\perp \cong f^*{(T B^2)}$ is trivial as a bundle over $W - C_f$ and
the inclusion $i \: W - C_f \to W$ induces an isomorphism $i^* \: H^2(W) \to H^2(W - C_f)$, the Chern class of $\s_f$ can be expressed by $c_1(\s_f) = (i^*)^{-1}(e(\xi_f))$, where $e$ denotes the Euler class. 

Moreover, since we are assuming that the regular fiber $F$ of $f$ has non-empty boundary, the Chern class $c_1(\s_f)$ coincides with the Euler class $e(f)$ considered in \cite{APZ13}, and hence it can be computed as follows (see also Gompf \cite{G98} and Gay and\break Kirby \cite{GK2004}). Let $c_1, \dots, c_n$ be the vanishing cycles of $f$ determined by a Hurwitz system for $f(C_f) = \{a_1, \dots, a_n\} \subset B^2$. By recalling that the 2-handles of the handlebody decomposition $H_f$ of $W$ are attached along parallel copies of such vanishing cycles, we can identify the cellular 2-chain group $C_2(W)$ with the free abelian group generated by the cycles $c_1, \dots, c_n$ with any given orientation, and the cellular 2-cochain group $C^2(W)$ with the free abelian group generated by the dual generators $c_1^*, \dots, c_n^*$. Then, denoting by $\rot(c)$ the rotation number of the positive unit tangent vector along $c$ with respect to any given trivialization of the tangent bundle $TF$, we have that $c_1(\s_f)$ is the cohomology class of
$$\sum_{i=1}^n \rot(c_i)\, c^*_i \in C^2(W).$$

In light of this formula, it is easy to check that the moves $S_\pm$ do not change $c_1(\s_f)$, while the moves $T$ and $U$ change it by adding an even cohomology class.

\begin{lemma}\label{Euler/thm}
Let $f \: W \to B^2$ be a (possibly achiral) Lefschetz fibration, and let $\eta \in H^2(W)$ be any integral lift of $w_2(W)$. Then, there is a finite sequence of moves $U$ that transforms $f$ into an allowable Lefschetz fibration $f' \: W \to B^2$ such that $c_1(\s_{f'}) = \eta$.
\end{lemma}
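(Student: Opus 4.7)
The plan is to combine the mod-$2$ compatibility of Chern classes with a careful analysis of how $U$ moves act on $c_1$. Since the Chern class of any $\spinc$ structure reduces modulo $2$ to $w_2$, we have $c_1(\s_f) \equiv w_2(W) \pmod 2$, and by hypothesis $\eta \equiv w_2(W) \pmod 2$ as well, so $\eta - c_1(\s_f) = 2\mu$ for some $\mu \in H^2(W)$. Because $W$ is a 4-dimensional 2-handlebody, its cellular 2-cochain group $C^2(W) = \bigoplus_i \Z\langle c_i^*\rangle$ surjects onto $H^2(W)$, so the classes $[c_i^*]$ dual to the vanishing cycles of $f$ generate $H^2(W)$, and I may write $\mu = \sum_i m_i[c_i^*]$ with integer coefficients.

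The core of the argument is to show that for any prescribed generator $[c_i^*]$ of $H^2(W)$, a suitable $U$ move changes $c_1(\s_f)$ by $\pm 2[c_i^*]$. A $U$ move introduces two new boundary-parallel vanishing cycles $c_+, c_-$ of opposite Dehn-twist signs in the fiber $F' = F \setminus (D_1 \cup D_2)$, and by the formula $c_1(\s_f) = [\sum_j \rot(c_j)\,c_j^*]$ its contribution to the representing cocycle is $\rot(c_+)\,c_+^* \pm \rot(c_-)\,c_-^*$. Since the band connecting the two new holes may be routed freely through $F$, subject only to avoiding the existing vanishing cycles, the attaching curves of the two new 2-handles can be made to travel alongside any chosen 1-handle of the Kas decomposition. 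Exploiting this flexibility, one arranges the cohomology class of the $U$-contribution to equal $\pm 2[c_i^*]$ in $H^2(W)$, and iterating such moves realizes the total required change~$2\mu$.

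To ensure allowability, I observe that any null-homologous vanishing cycle $c_j$ separates $F$ into two components, one of which, say $A$, contains a small disk $D$ disjoint from all vanishing cycles. A $U$ move based on this disk removes two subdisks $D_1, D_2 \subset A$; in the enlarged fiber $F'$, the region $A \setminus (D_1 \cup D_2)$ has boundary $c_j + \partial D_1 + \partial D_2$, so $[c_j] = -[\partial D_1] - [\partial D_2] \neq 0$ in $H_1(F')$ and $c_j$ is no longer null-homologous. The cycles $c_\pm$ produced by each $U$ move are themselves non-null-homologous in the enlarged fiber, being parallel to distinct new boundary components. Thus applying such $U$ moves to every initially null-homologous vanishing cycle renders $f$ allowable, while subsequent $U$ moves of the flexible type above adjust $c_1(\s_{f'})$ to equal $\eta$ without disturbing allowability.

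The main obstacle will be the cohomological computation in the second paragraph: quantifying precisely how the band's routing determines the class represented by a single $U$-move contribution, and verifying that an arbitrary even class in $H^2(W)$ can be realized in this way. This reduces to a chain-level analysis in the Kas handlebody, identifying the newly added cancelling 1-2 handle pair as one whose effect on the representing cocycle of $c_1$ can be contaminated by any existing 2-handle through an appropriate choice of the band's path, so that the resulting change in $[c_+^*] \pm [c_-^*]$ spans all of $2H^2(W)$.
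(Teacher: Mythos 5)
Your overall reduction coincides with the paper's: first make $f$ allowable by $U$ moves placed next to the null-homologous vanishing cycles, then use that $\eta-c_1(\s_f)$ is even and that the classes $[c_i^*]$ generate $H^2(W)$ to reduce everything to realizing a change of $\pm2[c_i^*]$ by a single $U$ move. The allowability step and the mod-$2$ bookkeeping are fine. But the central step is exactly the one you leave open, and the mechanism you sketch for it --- re-routing the band of the $U$ move through $F$ while keeping the band and the two holes disjoint from all vanishing cycles --- cannot produce any change of $c_1(\s_f)$ at all. The two new cycles $c_+$ and $c_-$ cobound an embedded pair of pants with the two new boundary circles $\Bd D_1$ and $\Bd D_2$, so $[c_\pm]=\pm([\Bd D_1]+[\Bd D_2])$ in $H_1(F')$ however the band is routed; hence $c_\pm$ run algebraically zero times over every old $1$-handle of the Kas decomposition, the old relations in $H^2(W)$ are unchanged, and the relations coming from the two new $1$-handles involve only $c_+^*$ and $c_-^*$ (no old cycle winds around the new holes). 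These force $[c_+^*]+[c_-^*]=0$, while $c_+$ and $c_-$ are parallel and so have equal rotation numbers; the contribution $\rot(c_+)[c_+^*]+\rot(c_-)[c_-^*]=\rot(c_+)\bigl([c_+^*]+[c_-^*]\bigr)$ therefore vanishes for every routing. So the flexibility you invoke does not span $2H^2(W)$; it spans $0$.

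The missing idea --- which is the heart of the paper's proof --- is that the \emph{existing} vanishing cycle $c_i$, not the new band, must be made to interact with the two new holes. The paper takes a properly embedded arc $a\cup b$ meeting $c_i$ transversally in a single point (such an arc exists precisely because $c_i$ is not null-homologous; this is where allowability is actually used, whereas in your argument it plays no role in the $c_1$ computation) and pushes $c_i$ along the two halves $a$ and $b$ toward $\Bd F$ before creating the holes, so that afterwards $c_i$ winds around them. The relations from the two new $1$-handles then give $[c_+^*]=[c_-^*]=\mp[c_i^*]$ while $\rot(c_\pm)=1$ in a suitable trivialization, yielding the change $\pm2[c_i^*]$. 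Without some such interaction between an old cycle and the new punctures, your $U$ moves leave $c_1(\s_f)$ untouched and the lemma does not follow.
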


\begin{proof}
Up to move $U$ we can assume $f$ itself to be allowable. Moreover, since $c_1(\s_f)$ is an integral lifting of $w_2(W)$ as well, the universal coefficient theorem implies that $\eta - c_1(\s_f)$ is an even class. Then, by keeping the above notation, it is enough to show how move $U$ can be used to add $\pm 2 [c^*_i]$ to $c_1(\s_f)$, for all $i = 1, \dots, n$. This is shown in Figure \ref{Euler/fig}, where the top and bottom lines represent (parts of) $\Bd F$, and $c_i$ is pushed along the arcs $a$ and $b$, respectively, before applying the move. These arcs are such that $a \cup b$ is a properly embedded arc that meets $c_i$ transversally at one point (the common endpoint of $a$ and $b$), and their existence is guaranteed by the allowability of $f$.

\begin{Figure}[htb]{Euler/fig}
\fig{}{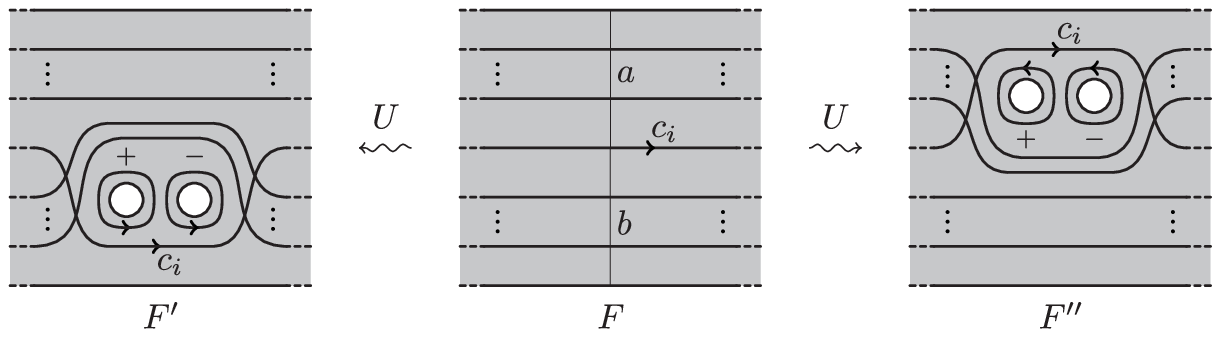}
    {}{How to change the Chern class $c_1(\s_f)$ by using the move $U$.}
\vskip-3pt
\end{Figure}

A simple computation shows that the move $U$ on the left side changes $c_1(\s_f)$ by $-2[c_i^*]$, while that on the right side changes $c_1(\s_f)$ by $+2[c_i^*]$. Indeed, by considering the change of basis in $C_2(W)$, together with the relations in (co)homology, and by taking the trivialization of $T F$ induced by a suitable planar immersion $F \to R^2$, we obtain $[c_+^*] = [c_-^*] = -[c_i^*]$ for the left side, and $[c_+^*] = [c_-^*] = [c_i^*]$ for the right side, where $c_+$ and $c_-$ denote the cycles with sign $+$ and $-$, respectively. Moreover, in both cases, $\rot(c_+) = \rot(c_-) = 1$. The other generators, as well as the corresponding rotation numbers, are not affected by the move.
\end{proof}

\begin{remark}\label{isotopy/rmk}
The identification of the total spaces of the Lefschetz fibrations before and after the move is canonical, up to isotopy, because it is given by 2-handle slidings and 1/2-handles births/deaths, which can be realized as embedded in $W$. This is important for having {\sl isotopy} instead of open book equivalence in next Theorems \ref{SU/thm} and \ref{ST/thm}.
\end{remark}

\begin{lemma}\label{spinc-thm}
Let $f \: W \to B^2$ be a (possibly achiral) Lefschetz fibration such that $H^2(W)$ has no 2-torsion. Then, for every $\s \in \Sc(W)$ there is a finite sequence of moves $U$ that transforms $f$ into an allowable Lefschetz fibration $f'$ such that $\s_{f'} = \s$.
\end{lemma}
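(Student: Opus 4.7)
The plan is to reduce this lemma directly to Lemma \ref{Euler/thm} by choosing the integral lift of $w_2(W)$ to be the Chern class $c_1(\s)$ of the prescribed $\spinc$ structure, and then to use the no-2-torsion hypothesis to promote equality of Chern classes to equality of $\spinc$ structures.

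More concretely, given $\s \in \Sc(W)$, the Chern class $c_1(\s) \in H^2(W)$ is by definition an integral lift of $w_2(W)$. So I would apply Lemma \ref{Euler/thm} with $\eta = c_1(\s)$ to produce a finite sequence of moves $U$ transforming $f$ into an allowable Lefschetz fibration $f' \: W \to B^2$ with $c_1(\s_{f'}) = c_1(\s)$.

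It then remains to show $\s_{f'} = \s$. Because $\Sc(W)$ is a torsor over $H^2(W)$, there is a unique $\lambda \in H^2(W)$ with $\s_{f'} = \s + \lambda$, and the identity $c_1(\s + \lambda) = c_1(\s) + 2\lambda$ recalled earlier forces $2\lambda = 0$. The hypothesis that $H^2(W)$ has no 2-torsion then gives $\lambda = 0$, so $\s_{f'} = \s$, as required.

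I do not expect any serious obstacle here: the whole argument is a two-line consequence of Lemma \ref{Euler/thm} and the injectivity of the map $c_1 \: \Sc(W) \to H^2(W)$ under the no-2-torsion assumption, both of which are already in place in the text. The only point worth emphasizing, as in Remark \ref{isotopy/rmk}, is that the sequence of $U$ moves provided by Lemma \ref{Euler/thm} identifies the total space with $W$ canonically (up to isotopy), so that comparing $\s_{f'}$ with $\s$ as elements of the same set $\Sc(W)$ makes sense.
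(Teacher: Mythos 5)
Your proposal is correct and follows exactly the paper's route: apply Lemma \ref{Euler/thm} with $\eta = c_1(\s)$ and then use the fact that, absent 2-torsion in $H^2(W)$, the map $c_1 \: \Sc(W) \to H^2(W)$ is injective, so equality of Chern classes gives $\s_{f'} = \s$. The paper states this in two lines; you have merely spelled out the injectivity argument via the torsor identity $c_1(\s+\lambda) = c_1(\s) + 2\lambda$, which is the intended justification.
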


\begin{proof}
The assumption on $H^2(W)$ implies that $\s$ in uniquely determined by its Chern class $c_1(\s)$. Thus, the conclusion follows from Lemma \ref{Euler/thm}.
\end{proof}

\medskip

To an open book decomposition $\F$ on a closed connected oriented 3-manifold $M$, we can  associate a $\spinc$ structure $\s_\F \in \Sc(M)$ by the same construction described above to define $\s_f$, with $TM \oplus R$ in place of $TW$ and the plane field $\xi \subset TM$ given by the contact structure associated to $\F$, which is determined by the Thurston-Winkelnkemper construction \cite{TW75}, in place of $\xi_f$. In this case, $\s_\F$ uniquely determines the restriction of the plane field $\xi$ over the 2-skeleton of $M$ up to homotopy. In fact, such restriction is homotopic to the plane field $TM \cap J(TM)$ over the 2-skeleton of $M$, see Ozbagci and Stipsicz \cite[Chapter 6]{OS2004} and Geiges \cite[Chapter 4]{Ge08} for more details. In particular, if the cellular decomposition of $M$ is chosen to have only one 3-cell, then $\s_\F$ determines $\xi$ up to homotopy over the complement of a point.

\medskip
At this point, we can state our equivalence theorems.

\begin{theorem}\label{SU/thm}
Any two open book decompositions of a closed, connected, oriented 3-manifold $M$ can be related by isotopy, stabilizations, and the special case of move $\Bd U$ in Figure \ref{Us/fig} (and their inverses).
\end{theorem}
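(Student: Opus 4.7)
The plan is to reduce to the Giroux-Goodman stable equivalence theorem, which asserts that two open book decompositions of $M$ are related by stabilizations alone if and only if their associated contact structures are homologous, equivalently, $\s_\F=\s_{\F'}$ in $\Sc(M)$. Given open books $\F$ and $\F'$ on $M$, it therefore suffices to modify $\F$ via isotopy, stabilizations, and special $\partial U$ moves into an open book $\tilde\F$ with $\s_{\tilde\F}=\s_{\F'}$; Giroux-Goodman will then relate $\tilde\F$ to $\F'$ by stabilizations alone.

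To construct $\tilde\F$, I would first realize $\F$ as the boundary of an allowable Lefschetz fibration $f\:W\to B^2$ whose total space $W$ is simply connected. Starting from any Dehn-twist factorization of $\omega_\F$, this can be arranged by inserting canceling pairs $\gamma\gamma^{-1}$ along simple closed curves whose classes normally generate $\pi_1(F_{g,b})$; such insertions correspond at the Lefschetz-fibration level to the auxiliary $Q$ move of Section \ref{lf/sec}, which alters $W$ but induces only an isotopy on the boundary open book. Under the simply-connected hypothesis, $H^2(W)$ is torsion-free, and the long exact sequence of the pair $(W,M)$ together with Poincar\'e-Lefschetz duality $H^3(W,M)\cong H_1(W)=0$ yields surjectivity of $H^2(W)\to H^2(M)$. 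In particular every $\spinc$ structure on $M$ extends to $W$, so there exists $\tilde\s\in\Sc(W)$ with $\tilde\s|_M=\s_{\F'}$. Lemma \ref{spinc-thm} then furnishes a finite sequence of $U$ moves carrying $\s_f$ to $\tilde\s$, and by inspection of the proof of Lemma \ref{Euler/thm} these are precisely the boundary-parallel $U$ moves whose restriction to $M$ is the special $\partial U$ move of Figure \ref{Us/fig}. Passing to the boundary, and invoking Remark \ref{isotopy/rmk} to upgrade equivalence to ambient isotopy in $M$, we obtain $\tilde\F$ with $\s_{\tilde\F}=\s_{\F'}$.

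The main obstacle I expect is the simply-connected realization of $\F$ as a Lefschetz boundary: one must verify that inserting canceling Dehn-twist pairs along generating curves via the $Q$ move indeed kills $\pi_1(W)$ without disturbing the open book on $M$ beyond isotopy, and that the corresponding 2-handles can be attached without breaking allowability. Once this is in hand, the remainder of the argument is a clean assembly of Lemma \ref{spinc-thm}, the cohomological surjectivity on $W$, and the Giroux-Goodman stable equivalence theorem.
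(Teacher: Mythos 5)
Your argument is correct, and it runs on the same engine as the paper's proof---extend $\spinc$ structures over a simply connected $4$-dimensional filling, adjust them with the $U$ moves of Lemma \ref{spinc-thm} (whose boundary restrictions are exactly the special $\Bd U$ moves of Figure \ref{Us/fig}), and finish with the Giroux--Goodman stable equivalence theorem---but the architecture is genuinely different. The paper takes $W$ to be a $2$-handlebody without $1$-handles coming from a surgery presentation of $M$, invokes Harer's existence theorem \cite{Ha79} to put \emph{some} Lefschetz fibration $g$ on $W$, applies Lemma \ref{spinc-thm} twice to get $f$ and $f'$ with $\s_f|_M=\s_\F$ and $\s_{f'}|_M=\s_{\F'}$, and consequently must invoke Giroux--Goodman \emph{twice}, to match $\F$ with $\Bd f$ and $\F'$ with $\Bd f'$. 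You instead build the filling from a Dehn-twist factorization of $\omega_\F$ and kill $\pi_1(W)\cong\pi_1(F_{g,b})/\langle\langle c_1,\dots,c_n\rangle\rangle$ by $Q$-move insertions of cancelling twists along non-null-homologous simple generators of the free group $\pi_1(F_{g,b})$; this is legitimate (the boundary effect of $Q$ is an ambient isotopy, and allowability and relative minimality are preserved since such generators can be chosen homologically essential), and it makes $\Bd f$ equal to $\F$ on the nose, so that Lemma \ref{spinc-thm} and Giroux--Goodman are each needed only once. Your route thus trades Harer's existence theorem and one application of stable equivalence for the verification you rightly flag as the main obstacle, which does go through. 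In either version one should state explicitly the identity $\s_{\Bd f'}=\s_{f'}|_M$---that the $\spinc$ structure of a Lefschetz fibration restricts on the boundary to that of the induced open book---since this is what licenses the final appeal to \cite{GG06}.
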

\begin{Figure}[htb]{Us/fig}
\fig{}{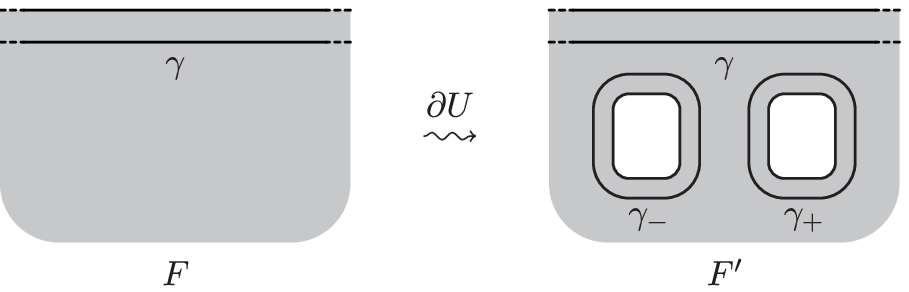}
    {}{Special case of move $\Bd U$. Here, in addition to what we said in the caption of Figure \ref{Umove/fig}, the sign of the Dehn twist $\gamma$ is arbitrary, and, on the right side, no vanishing cycles other than $\gamma$ and $\gamma_\pm$ separate the holes from the drawn part of the boundary.}
\vskip-6pt
\end{Figure}

\begin{proof}
Let $W = H^0 \cup H^2_1 \cup\dots \cup H^2_n$ be the oriented 4-dimensional 2-handlebody without 1-handles determined by an integral surgery presentation of $M \cong \Bd W$. Since $W$ is simply connected, the homomorphism $i^* \: H^2(W) \to H^2(M)$ induced by the inclusion $i \: M \to W$ is surjective, due to the exact cohomology sequence of the pair $(W, M)$. Then, also the induced natural map $i^* \: \Sc(W) \to \Sc(M)$ is surjective,
that is every $\spinc$ structure on $M$ extends over $W$.

Now, assume we are given any two open book decompositions $\F$ and $\F'$ of $M$. Since $H^2(W)$ has no torsion, we can construct two (possibly achiral) allowable Lefschetz fibrations $f, f' \: W \to B^2$, such that the associated $\spinc$ structures $\s_f,\, \s_{f'} \in \Sc(W)$ extend the $\spinc$ structure $\s_\F,\, \s_{\F'} \in \Sc(M)$ associated to $\F$ and $\F'$, respectively. Indeed, $f$ and $f'$ can be obtained by applying Lemma \ref{spinc-thm} to a given Lefschetz fibration $g \: W \to B^2$, whose existence is guaranteed by a result of Harer \cite{Ha79} (see also Etnyre and Fuller \cite{EF06} for a sketch of Harer's original proof). In particular, $f'$ can be obtained from $f$ by a finite sequence of moves $U$, and their inverses.

Let $\Bd f$ and $\Bd f'$ denote the open book decompositions of $M$ given by the boundary restrictions of $f$ and $f'$, respectively. Then, by the observation we made just before the theorem, the equality $\s_{\F} = \s_{\Bd f}$ implies that the contact structures associated to $\F$ and  $\Bd f$ are homotopic over the complement of a point. Hence, by the Giroux and Goodman theorem \cite{GG06}, $\F$ and $\Bd f$ are stably equivalent. Similarly, $\F'$ and $\Bd f'$ are stably equivalent as well. On the other hand, $\Bd f$ and $\Bd f'$ are related by a finite sequence of moves $\Bd U$, and their inverses, all having the special form described in Figure \ref{Us/fig} (cf. Figure \ref{Euler/fig}).
\end{proof}

\begin{theorem}\label{ST/thm}
Any two open book decompositions of a closed, connected, oriented 3-manifold $M$ can be related by isotopy, stabilizations, and the special case of move $\Bd T$ in Figure \ref{Ts/fig} (and their inverses).
\end{theorem}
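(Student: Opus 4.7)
The plan is to reduce Theorem \ref{ST/thm} to Theorem \ref{SU/thm} by establishing that, modulo stabilizations and isotopy, the special case of $\Bd U$ in Figure \ref{Us/fig} is equivalent to the special case of $\Bd T$ in Figure \ref{Ts/fig}. Once such a local equivalence is in hand, every sequence of stabilizations, isotopies and special $\Bd U$ moves provided by Theorem \ref{SU/thm} can be rewritten as a sequence of stabilizations, isotopies and special $\Bd T$ moves, which is exactly the conclusion of Theorem \ref{ST/thm}.

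First I would inspect the monodromy configuration on the right-hand side of the special $\Bd U$ move: a triple $(\gamma, \gamma_+, \gamma_-)$ in which $\gamma_\pm$ are boundary-parallel Dehn twists of opposite sign around two contiguous holes and $\gamma$ is a twist of arbitrary sign whose vanishing cycle separates the holes from the rest of the page, with no further vanishing cycle obstructing the relevant region. Working locally in a disk neighborhood of the two holes, I would perform a small number of stabilizations to introduce auxiliary bands and twists, then use Hurwitz moves and a lantern-type identity in $\M_{g,b}$ (exactly in the spirit of the proof of Proposition \ref{P/thm}) to recast this configuration as the right-hand side of the special $\Bd T$ move. Reading the same construction in reverse shows that the special $\Bd T$ move can also be produced from the special $\Bd U$ move plus stabilizations, establishing both directions of the equivalence.

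The main obstacle is the combinatorial book-keeping of this local calculation. One must verify that the stabilizations can be localized in a page disk disjoint from all other vanishing cycles, that the signs of the Dehn twists involved on both sides of the rewriting are compatible with the sign conventions built into each special move, and that the geometric constraints (contiguity of the two holes, absence of separating vanishing cycles in the relevant region, the placement of the connecting band) are preserved through every Hurwitz move and conjugation. Once this verification is carried out, no further input is needed: the Giroux--Goodman stable equivalence theorem and the Chern class computation of Lemma \ref{Euler/thm} have already been absorbed into Theorem \ref{SU/thm}, and the remark on isotopy in Remark \ref{isotopy/rmk} ensures that the whole equivalence takes place inside $M$ up to ambient isotopy rather than merely diffeomorphism.
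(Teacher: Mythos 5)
Your proposal follows essentially the same route as the paper: the paper's proof of Theorem \ref{ST/thm} consists precisely of showing (Figure \ref{EulerT/fig}) that the special $\Bd U$ move of Figure \ref{Us/fig} is realized by two stabilizations and a single special $\Bd T$ move, and then invoking Theorem \ref{SU/thm} and its proof. The only inessential differences are that the paper exhibits the local identity directly by a picture rather than via a lantern-type relation, and that only the one direction (special $\Bd U$ expressed through stabilizations and special $\Bd T$) is actually needed, since inverses are then automatic.
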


\begin{Figure}[ht]{Ts/fig}
\vskip-9pt
\fig{}[15pt]
{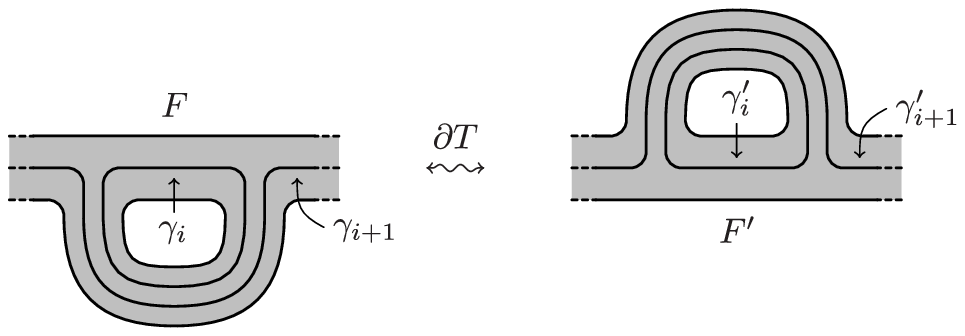}
    {}{Special case of move $\Bd T$. Of course, all the properties required caption of Figure \ref{Tmove/fig} hold here as well.}
\vskip-3pt
\end{Figure}

\begin{Figure}[b]{EulerT/fig}
\fig{}{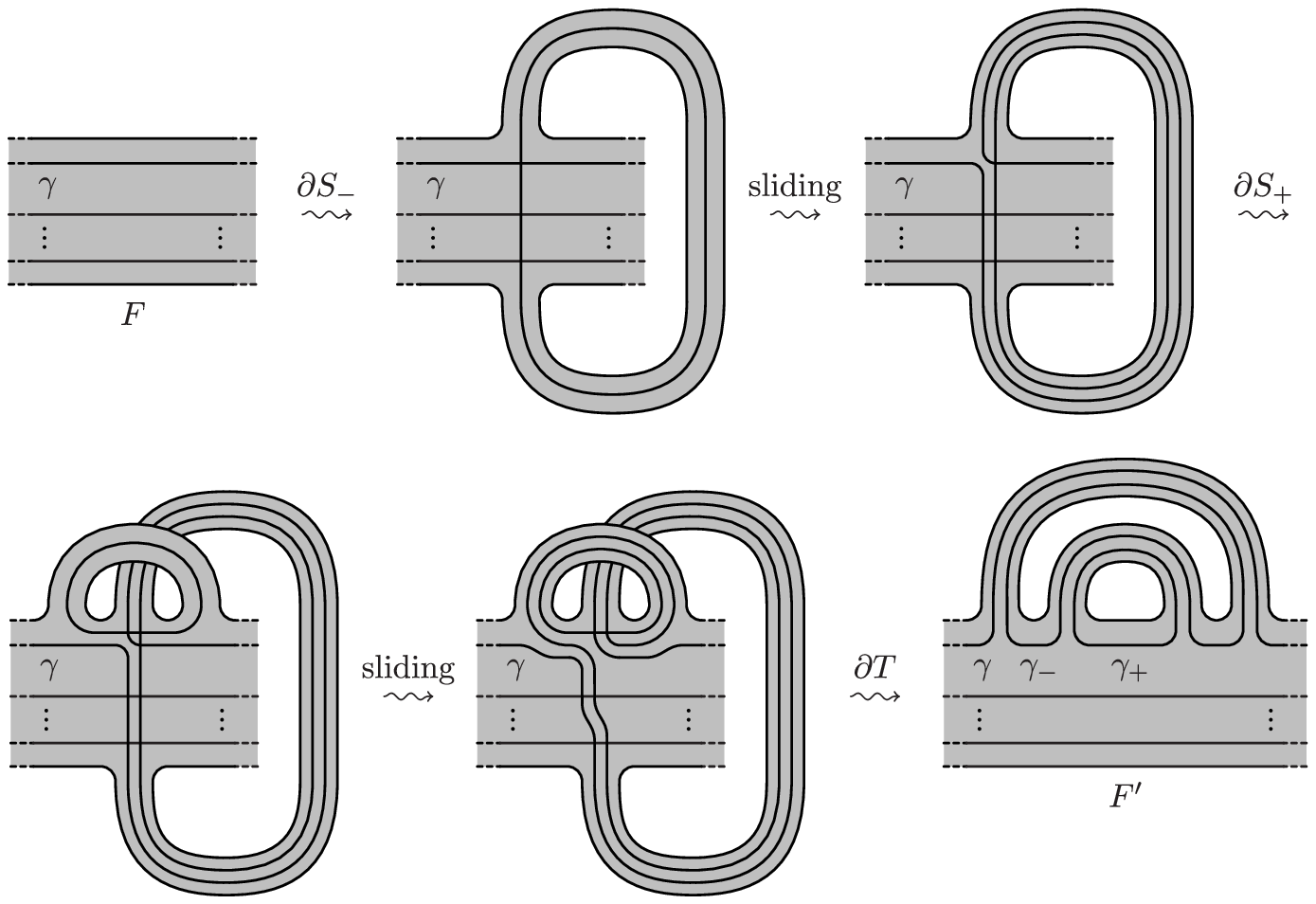}
    {}{The special $\Bd U$ move in terms of $\Bd S_\pm$ and the special $\Bd T$ move.}
\end{Figure}

\begin{proof}
Figure \ref{EulerT/fig} shows how to realize the special $\Bd U$ move in Figure \ref{Us/fig} on the boundary of an allowable Lefschetz fibration, by two stabilizations and a single special $\Bd T$ as in Figure \ref{Ts/fig}. Then, the theorem immediately follows from Theorem \ref{SU/thm} and its proof.
\end{proof}

As a corollary, we have the following.

\begin{corollary}
Let $W$ be a compact, connected, oriented, 4-dimensional 2-handlebody with $H_1(W) = 0$. Then, every open book decomposition of $M = \Bd W$ admits a stabilization that can be extended to a (possibly achiral) allowable Lefschetz fibration $f \: W \to B^2$.
\end{corollary}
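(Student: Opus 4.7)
The plan is to imitate the proof of Theorem~\ref{SU/thm}, using the vanishing of $H_1(W)$ to ensure that $\spinc$ structures on $M$ extend to $W$ and that Lemma~\ref{spinc-thm} applies. First I will establish the cohomological setup: since $H_1(W) = 0$, the universal coefficient theorem gives $H^2(W) \cong \mathrm{Hom}(H_2(W), \Z)$, which is free and in particular has no $2$-torsion; Lefschetz duality yields $H^3(W, M) \cong H_1(W) = 0$, so the long exact cohomology sequence of $(W, M)$ makes the restriction $i^* \: H^2(W) \to H^2(M)$ surjective. Combined with the non-emptiness of $\Sc(W)$ (the $\spinc$ structure of any Lefschetz fibration on $W$ gives an element), this surjectivity implies that every $\spinc$ structure on $M$ extends over $W$.

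Next, given an open book $\F$ of $M$ with associated $\spinc$ structure $\s_\F \in \Sc(M)$, I will pick an extension $\tilde\s \in \Sc(W)$ and invoke Harer \cite{Ha79} to produce some (possibly achiral) Lefschetz fibration $g \: W \to B^2$. Lemma~\ref{spinc-thm}, whose hypothesis is met thanks to the $2$-torsion-freeness of $H^2(W)$, then converts $g$ by $U$-moves into an allowable Lefschetz fibration $f \: W \to B^2$ with $\s_f = \tilde\s$; restricting to $\Bd W = M$ gives $\s_{\Bd f} = \s_\F$, so by the observation preceding Theorem~\ref{SU/thm} the contact structures associated to $\Bd f$ and $\F$ are homotopic on the complement of a point in $M$. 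The Giroux--Goodman theorem \cite{GG06} then supplies a common stabilization $\F'$ of $\Bd f$ and $\F$.

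Finally, I will lift $\F'$ to a Lefschetz fibration on $W$. Each boundary stabilization $\Bd S_\pm$ in the sequence from $\Bd f$ to $\F'$ is the restriction to $\Bd W$ of the corresponding fibration stabilization $S_\pm$; carrying out the lifted sequence on $f$ produces a (possibly achiral) Lefschetz fibration $f' \: W \to B^2$ with $\Bd f' = \F'$. Each newly created vanishing cycle crosses its associated new band exactly once, so it pairs to $\pm 1$ via the Poincar\'e--Lefschetz intersection pairing with the relative cycle given by the cocore of that band, and is therefore non-null-homologous in the new fiber, keeping $f'$ allowable. Since $\F'$ is by construction a stabilization of $\F$, this completes the argument. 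The one delicate point is the cohomological extension of $\s_\F$ to $W$; everything after that is a direct re-run of the proof of Theorem~\ref{SU/thm}.
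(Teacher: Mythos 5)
Your proposal is correct and follows exactly the route the paper intends: the corollary is stated as a byproduct of the proof of Theorem~\ref{SU/thm}, and you re-run that proof (extend $\s_\F$ over $W$, apply Harer's existence result and Lemma~\ref{spinc-thm} to get an allowable $f$ with $\s_{\Bd f}=\s_\F$, invoke Giroux--Goodman for a common stabilization, and lift the boundary stabilizations to $S_\pm$ moves on $f$, which preserve allowability). The only added content is the correct replacement of the paper's simple-connectivity argument by the universal coefficient theorem and Lefschetz duality, which is precisely what the weaker hypothesis $H_1(W)=0$ requires.
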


\section{Final remarks\label{rem/sec}}
\subparagraph{Harer's twisting} We recall the Harer twisting operation for arbitrary 3-mani\-folds that was described in \cite[Section 4]{H82}. Let $\F$ be an open book decomposition of a 3-manifold $M$ with page $F \cong F_{g,b}$ and with monodromy $\omega_{\F} \in \M_{g,b}$, and consider two pages $F_1,\, F_2 \subset M$ of $\F$. Let $c_i \subset \Int F_i$ be a simple closed curve, and let $\epsilon_i = \pm1$, for $i =1, 2$. Suppose that there exists an annulus $A \cong S^1 \times [0,1]$ embedded in $M$, such that $\Bd A = c_1 \cup c_2$. Fix an orientation on $A$, and consequently on $\Bd A$. Let $c'_i$ be a simple curve obtained by pushing off $c_i$ in $F_i$ with one extra $\epsilon_i$ full twist, oriented accordingly, and let $\epsilon'_i$ be the algebraic intersection of $A$ and $c'_i$. If $\epsilon'_1 = - \epsilon'_2 = \pm1$, then the open book $\F$ can be modified into an open book $\F'$ on $M$ with the same page and monodromy $$\omega_{\F'} = \omega_{\F}\, t_{c_1}^{-\epsilon_1} t_{c_2}^{-\epsilon_2},$$ where $t_c$ denotes the positive Dehn twist along a simple curve $c \subset F$.

\begin{Figure}[b]{Us-Harer/fig}
\vskip6pt
\fig{}{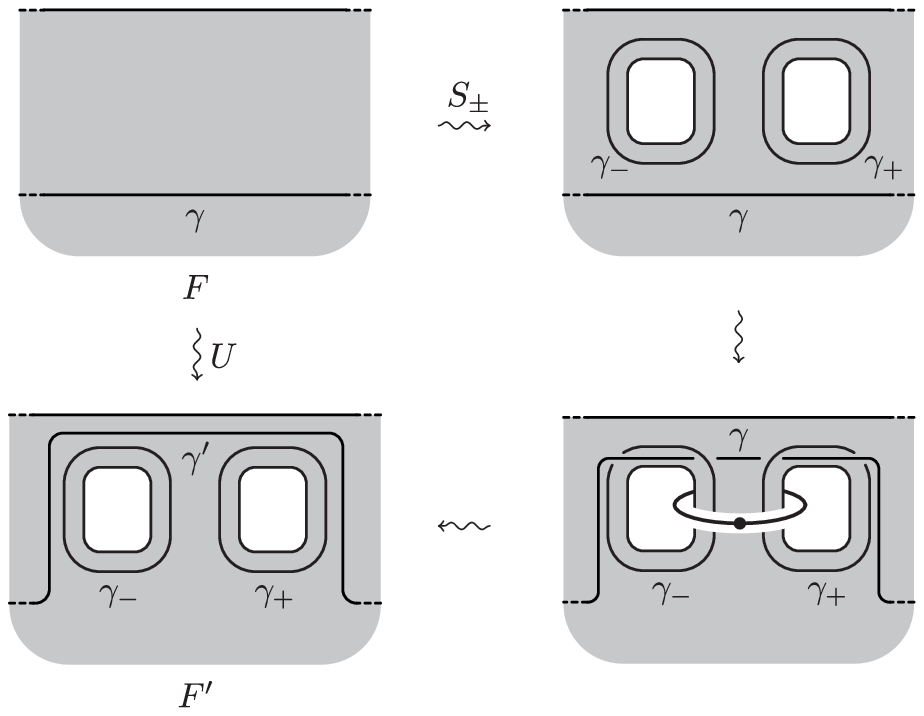}
    {}{The special $U$ move as a Harer twisting}
\end{Figure}

Now, we want to show explicitly how the special case of the $\Bd U$ move in Figure \ref{Us/fig} can be expressed as a Harer twisting and two stabilizations. Indeed, following Figure \ref{Us-Harer/fig}, we start with two opposite stabilizations in the upper part, and then, by using the associated handlebody decomposition, we slide the attaching curve $\gamma$ over $\gamma_+$ and $\gamma_-$. This does not affect the framing, and sliding $\gamma$ over the 1-handle between $\gamma_+$ and $\gamma_-$ gives the handlebody corresponding to the application of the $U$ move. Now, it is enough to observe that the above slidings determine an annulus between $\gamma$ and $\gamma'$ as in the Harer twisting. For the boundary open book, the transformation of the monodromy from the upper right part of Figure \ref{Us-Harer/fig} to the lower left part of it, is multiplication by $\gamma^{-1} \gamma'$. Notice that this gives a 4-dimensional realization of the Harer twisting.

This observation, together with Theorem \ref{SU/thm}, leads to an alternative proof of Theorem 2 in Harer's paper \cite{H82}. 

A similar interpretation can be given for the special $T$ move.

\subparagraph{Effect on the contact structure} Let $\F$ be an open book decomposition of a 3-manifold $M$. Then, the Thurston-Winkelnkemper construction \cite{TW75} gives a contact structure $\xi_\F$ on $M$ compatible with $\F$, which is well-defined up to isotopy (see Giroux \cite{Gi02}).

We discuss the effect of a move applied to $\F$ on the contact structure $\xi_\F$.
The followings are well-known: (1) by the Giroux theorem \cite{Gi02}, two open book decompositions of $M$ are compatible with isotopic contact structures if and only if they can be related by positive stabilizations $S_+$, and isotopy; (2) by Torisu \cite{To2000}, a negative stabilization $S_-$ makes the corresponding contact structure overtwisted.

The $\Bd U$ move depicted in Figure \ref{Umove/fig} makes the corresponding contact structure overtwisted. To see this, consider the curve $c$ in Figure \ref{U-OT/fig}. This curve bounds a disk $D \subset M$ (as it can be easily proved by sliding it as in Figure \ref{Us-Harer/fig}). Moreover, the framing of $c$ with respect to the page and that with respect to $D$ are the same. By the main theorem of Yamamoto \cite{Ya07}, the contact structure is overtwisted (the curve $c$ is a {\sl twisting loop} in Yamamoto paper's terminology).
\begin{Figure}[htb]{U-OT/fig}
\fig{}{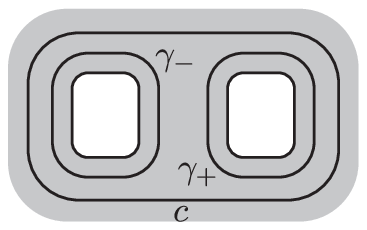}
    {}{Overtwisted disk after a $\Bd U$ move.}
\end{Figure}

By adapting the proof of the main theorem of Ding, Geiges and Stipsicz \cite{DGS05}, one should be able to prove that a special $\Bd U$ move as in Figure \ref{Us/fig}, realized in the stabilized version as the transformation between the upper right and the lower left parts of Figure \ref{Us-Harer/fig}, modifies the contact structure by a {\sl Lutz twist} along the transverse push-off of the Legendrian knot represented by the curve $\gamma$.

A twisting loop can be easily identified in both sides of Figure \ref{Tmove/fig} (as the core of the bigger annulus), concluding that a $\Bd T$ move can be applied only to an overtwisted open book, and it preserves the overtwistedness.

\section*{Acknowledgements}
The second author acknowledges support of the 2013 ERC Advanced Research Grant 340258 TADMICAMT.

The authors are members of the group GNSAGA of Istituto Nazionale di Alta Matematica ``Francesco Severi", Italy.

\thebibliography{00}

\bibitem{A1923}
J. W. Alexander, {\sl A lemma on systems of knotted curves}
Proc. Nat. Acad. Sci. U.S.A. {\bf 9} (1923), 93--95.

\bibitem{APZ13}
N. Apostolakis, R. Piergallini and D. Zuddas,
{\sl Lefschetz fibrations over the disc}, 
Proc. Lond. Math. Soc. {\bf 107} (2013), no. 2, 340--390. 

\bibitem{DGS05}
F. Ding, H. Geiges and A. I. Stipsicz,
{\sl Lutz twist and contact surgery},
Asian J. Math. {\bf 9} (2005), 57--64. 

\bibitem{EF06}
J. B. Etnyre and T. Fuller,
{\sl Realizing 4-manifolds as achiral Lefschetz fibrations}, 
Int. Math. Res. Not. 2006, Art. ID 70272.

\bibitem{GK2004}
D. T. Gay and R. Kirby,
{\sl Constructing symplectic forms on 4-manifolds which vanish on circles},
Geom. Topol. {\bf 8} (2004), 743--777.

\bibitem{Ge08}
H. Geiges,
{\sl An introduction to contact topology}, 
Cambridge Studies in Advanced Mathematics 109, Cambridge University Press, 2008.

\bibitem{Gi02}
E. Giroux,
{\sl G\'eom\'etrie de contact: de la dimension trois vers les dimensions sup\'erieures}, Proceedings of the International Congress of Mathematicians, Vol. II (Beijing, 2002), 405--414, Higher Ed. Press, 2002. 

\bibitem{GG06}
E. Giroux, N. Goodman,
{\sl On the stable equivalence of open books in three-manifolds}, 
Geom. Topol. {\bf 10} (2006), 97--114. 

\bibitem{G97}
R. E. Gompf, {\sl $\Spinc$-structures and homotopy equivalences}, Geom. Topol. {\bf 1} (1997), 41--50. 

\bibitem{G98}
R. E. Gompf,
{\sl Handlebody construction of Stein surfaces},
Ann. of Math. {\bf 148} (1998), 619--693. 

\bibitem{GA74}
F. Gonz\'alez-Acu\~na, {\sl 3-dimensional open books}, Lectures, Univ. of Iowa Topology Seminar, 1974/75.

\bibitem{Ha79}
J. Harer, {\sl Pencils of curves on 4-manifolds}, PhD thesis, University of California, Berkeley, 1979.

\bibitem{H82} J. Harer,
{\sl How to construct all fibered knots and links}, 
Topology {\bf 21 }(1982), no. 3, 263--280. 

\bibitem{K80}
A. Kas,
{\sl On the handlebody decomposition associated to a Lefschetz fibration}
Pacific J. Math. {\bf 89} (1980), 89-104.

\bibitem{My78}
R. Myers, {\sl Open book decompositions of 3-manifolds}, 
Proc. Amer. Math. Soc. {\bf 72} (1978), 397--402. 

\bibitem{OS2004}
B. Ozbagci and A. I. Stipsicz,
{\sl Surgery on contact 3-manifolds and Stein surfaces}, 
Bolyai Society Mathematical Studies 13, Springer-Verlag, 2004. 

\bibitem{Ro90}
D. Rolfsen, 
{\sl Knots and links},
Corrected reprint of the 1976 original, Mathematics Lecture Series, 7. Publish or Perish, Inc., Houston, 1990.

\bibitem{TW75}
W. P. Thurston and H. E. Winkelnkemper,
{\sl On the existence of contact forms},
Proc. Amer. Math. Soc. {\bf 52} (1975), 345--347. 

\pagebreak

\bibitem{To2000}
I. Torisu,
{\sl Convex contact structures and fibered links in 3-manifolds}.
Internat. Math. Res. Notices 2000, 441--454. 

\bibitem{Ya07}
R. Yamamoto,
{\sl Open books supporting overtwisted contact structures and the Stallings twist},
J. Math. Soc. Japan {\bf 59} (2007), 751--761. 

\endthebibliography

\end{document}